\colorlet{MyBlue}{Blue!50!Cerulean}
\definecolor{Viridian}{RGB}{64,130,109}
\definecolor{Pewter}{RGB}{142,146,148}
\definecolor{Vermilion}{RGB}{217,56,30}
\definecolor{Celadon}{HTML}{ACE1AF}
\definecolor{Cinnabar}{RGB}{227,66,52}
\definecolor{Saffron}{RGB}{244,196,48}
\definecolor{PrussianBlue}{RGB}{0,49,83}
\colorlet{TanbaIro}{PrussianBlue!60!SkyBlue}
\definecolor{Ebony}{RGB}{85,93,80}
\definecolor{SakuraIro}{RGB}{252,201,185}
\definecolor{Azalea}{RGB}{216,69,101}
\definecolor{Teak}{RGB}{171, 137, 83}
\definecolor{Silver}{RGB}{192,192,192}
\pgfplotsset{
    every axis/.append style={
        grid=major,%
        grid style={densely dotted}
    },%
    every axis legend/.append style={
        font=\tiny,%
        cells={anchor=west}%
    },%
    every axis plot/.append style={%
        thick
    },%
    log x ticks with fixed point/.style={
        xticklabel={
            \pgfkeys{/pgf/fpu=true}
            \pgfmathparse{exp(\tick)}%
            \pgfmathprintnumber[fixed relative, precision=3]{\pgfmathresult}
            \pgfkeys{/pgf/fpu=false}
        }
    },
    log y ticks with fixed point/.style={
        yticklabel={
            \pgfkeys{/pgf/fpu=true}
            \pgfmathparse{exp(\tick)}%
            \pgfmathprintnumber[fixed relative, precision=3]{\pgfmathresult}
            \pgfkeys{/pgf/fpu=false}
        }
    },
    log y ticks with percent/.style={
        yticklabel={
            \pgfkeys{/pgf/fpu=true}
            \pgfmathparse{exp(\tick)}%
            \pgfmathparse{100*\pgfmathresult}
            $ \pgfmathprintnumber[fixed, precision=2]{\pgfmathresult} \%$
            \pgfkeys{/pgf/fpu=false}
        }
    },
}
\title{Adaptive space-time finite element methods\\ for 
non-autonomous parabolic 
problems\\ with distributional sources}
\author{Ulrich Langer, Andreas Schafelner}
\newtheorem{theorem}{Theorem}
\newtheorem{lemma}[theorem]{Lemma}
\theoremstyle{definition}
\newtheorem{remark}[theorem]{Remark}
\definecolor{ao(english)}{rgb}{0.0, 0.5, 0.0}
\begin{document}

\maketitle

\begin{abstract}
We consider locally stabilized, conforming finite element schemes on  completely unstructured simplicial space-time meshes 
for the numerical solution of parabolic initial-boundary value problems 
with variable, possibly discontinuous in space and time, coefficients.
Distributional sources are also admitted.
Discontinuous coefficients, non-smooth boundaries, changing boundary conditions,
non-smooth or incompatible initial conditions, and non-smooth right-hand sides 
can lead to non-smooth solutions.\\
We present new a priori and a posteriori error estimates for low-regularity solutions.
In order to avoid reduced convergence rates appearing in the case of uniform mesh refinement,
we also consider adaptive refinement procedures based on residual
a posteriori error indicators and functional a posteriori error estimators.
The huge system of space-time finite element equations is then solved by means of
GMRES preconditioned by space-time algebraic multigrid. 
In particular,  in the 4d space-time case that is 3d in space, simultaneous space-time parallelization can considerably reduce the computational time. 
We present and discuss numerical results for several examples possessing
different regularity features.
\\[1em]
Keywords: 
Non-autonomous parabolic initial-boundary value problem,
distributional sources, 
space-time finite element methods, unstructured meshes, adaptivity
\end{abstract}

\section{Introduction}
\label{LS:Section:Introduction}
Time-parallel and space-time methods, in particular, space-time finite element methods have a long history. 
We refer the interested reader to the survey articles 
\cite{LS:Gander2015a} and \cite{LS:SteinbachYang:2019a}
for a comprehensive review of time-parallel and space-time methods, respectively.
We here only mention the Streamline Upwind Petrov – Galerkin (SUPG) finite element method (sometimes also called Streamline Diffusion (SD) method),
that was originally proposed by Hughes and Brooks for 
solving stationary convection-dominated convection-diffusion problems 
\cite{LS:HughesBrooks:1979a}, {and}
was {later} used by Johnson and Saranen \cite{LS:JohnsonSaranen:1986a} 
to construct stable finite element schemes 
for time-dependent 
convection-diffusion
problems, discretizing space and time simultaneously in  time-slices.
Similarly, in \cite{LS:HughesFrancaHulbert:1989a}, Hughes, Franca and Hulbert proposed and analyzed the 
more general Galerkin/Least-Squares Finite Element Method (FEM) for stationary and instationary problems
including time-dependent convection-diffusion problems. 
Both techniques were then used in many publications to stabilize
finite element schemes.
The revival of space-time methods is certainly connected 
with the availability of massively parallel computers 
with thousands or even millions of cores, but also with the simultaneous adaptivity in space and time,
the easy handling of moving spatial domains and interfaces that are fixed in the space-time domain $Q$,
and the all-at-once treatment of first-order optimality conditions (KKT system) in optimal control 
problems with parabolic or hyperbolic PDE constraints.
Recently, time-upwind stabilizations have been used to construct and analyze
coercive space-time Isogeometric Analysis (IGA) schemes in \cite{LS:LangerMooreNeumueller:2016a},
partial low-rank tensor IGA schemes  
\cite{LS:MantzaflarisScholzToulopoulos:2019a}, 
and
space-time $hp$-finite element schemes combined with mesh-grading in time to treat singularities in time \cite{LS:DevaudSchwab:2018a}.
These papers rely on a tensor-product structure of space and time. Results related to methods that do not assume a tensor-product structure were presented, e.g., in \cite{LS:Steinbach:2015a}, where the author proposes and analyzes a $\inf$-$\sup$-stable space-time FEM on completely unstructured decompositions of the space-time domain, and in \cite{LS:BankVassilevskiZikatanov:2016a}, where an arbitrary dimension convection-diffusion scheme for space-time problems is proposed, with two different discretizations. 
Unstructured space-time finite element and IGA methods have also been used to solve involved 
engineering problems; see, e.g., \cite{LS:Behr2008a,LS:KaryofylliWendlingMakeHostersBehr:2019a}, 
and the references therein, where mostly unstructured meshes are used within time slabs 
into which the space-time cylinder $Q$ is decomposed.
Following our previous work \cite{LS:LangerNeumuellerSchafelner:2019a},
we will derive, analyze and test space-time finite element methods on completely 
unstructured simplicial space-time meshes 
for non-autonomous parabolic initial-boundary value problems with 
distributional right-hand sides of the form: find  $ u $ such that
\begin{align}
\partial_{t}u - \mathrm{div}_x(\nu\, \nabla_{x} u ) &= f - \,\mathrm{div}_x(\mathbf{f})&&\text{in}\ Q=\Omega\times(0,T), \label{LS:eq:modelproblem}\\
u&=u_D := 0&&\text{on}\ \Sigma = \partial\Omega\times(0,T),\label{LS:eq:modelproblem:bc}\\
u&=u_0 &&\text{on}\ \Sigma_0 = \Omega\times\{0\},\label{LS:eq:modelproblem:ic}
\end{align}
where the spatial domain $\Omega \subset \mathbb{R}^d$, $d=1,2,3$, is bounded and Lipschitz,
$T > 0$ denotes the final time, 
$\nu \in L_\infty(Q)$ is a given uniformly bounded and positive coefficient
that may discontinuously depend on the spatial variable $x =(x_1,\ldots,x_d)$ and the time variable $t$,
$ f \in L_2(Q) $, and $\mathbf{f} \in [L_2(Q)]^d$.
If $\mathbf{f} = \mathbf{0}$ and $\nu$ is of bounded variation in $t$ for almost 
all $x$, then the weak solution $u$ of (\ref{LS:eq:modelproblem})--(\ref{LS:eq:modelproblem:ic})
has maximal parabolic regularity, i.e., 
$\partial_{t}u \in L_2(Q)$ and $\mathcal{L}_x u := - \mathrm{div}_x(\nu\, \nabla_{x} u )\in L_2(Q)$,
see \cite{LS:Dier:2015a}. Thus, in the maximal parabolic regularity setting, the PDE 
$\partial_{t}u - \mathrm{div}_x(\nu\, \nabla_{x} u ) = f$ makes sense in $L_2(Q)$,
and this setting was the starting point for deriving the consistent space-time 
finite element schemes in \cite{LS:LangerNeumuellerSchafelner:2019a}.
If $\mathbf{f} \neq \mathbf{0}$ and it is not smooth enough, i.e., $\mathbf{f}$ creates a distribution that is not in $L_2(Q)$, 
then maximal parabolic regularity does not hold. However, in many applications, the vector field $\mathbf{f}$ has a special
structure. For instance, in 2d eddy current problems, $\nu$ denotes the reluctivity, 
$f$ is nothing but the impressed current in the coils, and $\mathbf{f}$ represents 
the magnetization in the permanent magnets. In such a way, we can model a rotating 
electrical machine, see, e.g., \cite{LS:PyrhoenenJokinenHrabovcova:2008a}.
Motivated by such applications, we can assume that there is a decomposition of 
the space-time cylinder $ \overline{Q} = \bigcup_{i=1}^m \overline{Q_i} $ 
into $m$ non-overlapping, sufficiently smooth subdomains $Q_1,\ldots,Q_m$ such that 
$\mathbf{f}$ is sufficiently smooth in $Q_i$ but may have jumps across the 
boundaries of these subdomains. These jumps create distributions located on the 
interfaces between the subdomains. 
In this paper, we consider exactly this case of such a special right-hand side with a piecewise 
smooth vector-field $\mathbf{f}$ leading to a distribution that is located on the interfaces.
We again derive a consistent space-time finite element scheme, for which we prove 
a best-approximation (C\'{e}a-like) estimate that immediately yields convergence rate 
estimates in terms of the mesh-size $h$ under additional regularity assumptions.
In the case of low-regularity solutions, say, $u \in H^{1+s}(Q)$ with some $s \in (0,1]$,
and uniform mesh refinement,
we can only expect a convergence rate of $O(h^s)$ in the corresponding energy norm
independent of the polynomial order $p$ of the finite element shape functions used.
To circumvent this loss in the convergence rate, we also consider full space-time adaptive 
finite element procedures based on residual indicators or functional error estimators.
We compare the efficiency of these adaptive approaches in a series of numerical 
experiments.

The remainder of the paper is organized as follows. 
In Section~\ref{LS:Section:SpaceTimeVFandSolvabilityResults}, we recall 
the standard space-time variational formulation 
and solvability results including maximal parabolic regularity 
and other regularity results.
Section~\ref{LS:Section:SpaceTimeFEM} is devoted to the derivation 
of consistent space-time finite element schemes. 
New a priori error estimates are presented in Section~\ref{LS:Section:Apriori}.
Section~\ref{LS:Section:Aposteriori} deals with a posteriori error estimates, 
which are used to control the 
adaptive finite element procedures in our numerical experiments.
In Section~\ref{LS:Section:NumericalResults}, we present various numerical results 
for uniform and adaptive mesh refinement in space and time.
Finally, we draw some conclusions in Section~\ref{LS:Section:Conclusions}.

\section{Space-time variational formulation and solvability results}
\label{LS:Section:SpaceTimeVFandSolvabilityResults}

We start with Ladyzhenskaya's space-time variational formulation of the para\-bo\-lic
initial-boundary value problem (\ref{LS:eq:modelproblem})--(\ref{LS:eq:modelproblem:ic}) 
in space-time Sobolev spaces, see \cite{LS:Ladyzhenskaya:1954a} 
and the classical monographs \cite{LS:LadyzhenskayaetSolonnikovUraltseva:1967a,LS:Ladyzhenskaya:1985a}:
find $ u\in H^{1,0}_0(Q):= \{v \in L_2(Q):\, \nabla_x u \in [L_2(Q)]^d,\, u=0\;\mbox{on}\;\Sigma\}$ such that
\begin{equation}
\label{LS:eq:SpaceTimeVF}
\begin{aligned}
    a(u,v):=\int_{Q} (-u \partial_{t}v + \nu \nabla_{x}u\cdot\nabla_{x}v) \;\mathrm{d}Q = \\
    \int_{Q} (f\,v + \mathbf{f}\cdot\nabla_{x}v)\;\mathrm{d}Q + \int_\Omega u_0(x)\,v(x,0)\;\mathrm{d}x
    =: l(v),\ \forall v \in H^{1,1}_{0,\overline{0}}(Q),
\end{aligned}
\end{equation}
where 
$H^{1,1}_{0,\overline{0}}(Q) := \{v \in L_2(Q):\, \nabla_x u \in [L_2(Q)]^d,\, \partial_t u \in L_2(Q),\, 
u=0\;\mbox{on}\;\Sigma,\, u=0\;\mbox{on}\;\Sigma_T = \Omega \times \{T\} \}$.
If 
$f \in L_{2,1}(Q):=\{ v: Q \rightarrow \mathbb{R} : \int_{0}^{T}\! \| v(\cdot,t) \|_{L_2(\Omega)} \; \mathrm{d}t < \infty \}$, $\mathbf{f} \in [L_2(Q)]^d$, $u_0 \in L_2(\Omega)$, 
and 
the coefficient $\nu \in L_\infty(Q)$
is uniformly bounded and positive, i.e., there exist positive constants 
$\underline{\nu}$ and $\overline{\nu}$ such that
\begin{equation}
 \label{LS:eq:nu}
 \underline{\nu} \le \nu(x,t) \le  \overline{\nu}\; \mbox{for almost all}\; (x,t) \in Q,
\end{equation}
then the space-time variational formulation (\ref{LS:eq:SpaceTimeVF}) has a unique solution $u$ that even belongs 
to the space $\mathring{V}_2^{1,0}(Q) \subset H^{1,0}_0(Q)$; 
see \cite{LS:Ladyzhenskaya:1985a}, Chapter III, Paragraph 3, Theorem~3.2.
In particular, this means that $\|u(\cdot,t)\|_{L_2(\Omega)}$ is continuous with respect to (wrt) $t$.
Therefore, the initial condition makes sense in $L_2(\Omega)$.
The same result can be obtained if we consider the solvability 
of (\ref{LS:eq:modelproblem})--(\ref{LS:eq:modelproblem:ic}) 
in the Bochner space 
$W(0,T) := \{v \in L_2(0,t;H_0^1(\Omega)): \partial_t u \in L_2(0,t;H^{-1}(\Omega))\}$ 
of abstract functions $u$ mapping the time interval $[0,T]$ to some Hilbert or Banach space $X(\Omega)$. Indeed, 
the parabolic initial-boundary value problem is uniquely solvable in $W(0,T)$, 
and $W(0,T)$ is continuously embedded into $C([0,T],L_2(\Omega))$;
see, e.g., \cite{LS:Lions:1971}.
This view at parabolic initial-boundary value  is closely connected with the 
so-called line variational formulations and the corresponding operator 
ordinary differential equations, and is the starting point for time-stepping methods.

If $f \in L_2(Q)$, $\mathbf{f} = \mathbf{0}$, $u_0 \in H_0^1(\Omega)$, and $\nu=1$,
then the  solution  $ u\in {H}^{1,0}_{0}(Q) $ of (\ref{LS:eq:SpaceTimeVF})  belongs to the space
$H^{\Delta,1}_0(Q) = \{v \in H^1_0(Q) : \Delta_x v \in L_2(Q) \},$
and continuously depends on $t$ in the norm of the space $H^1_0(\Omega)$;
see \cite{LS:Ladyzhenskaya:1985a}, Chapter III, Paragraph 2, Theorem~2.1.
This property is called maximal parabolic regularity.
If $\nu = \nu(x,t)$ depends on $x$ and $t$ (non-autonomous case) possibly in discontinuous way
(like in electrical machines, moving interfaces), then 
it is more involved to specify the most general conditions for $\nu$ such 
that maximal parabolic regularity happens, i.e. the solution 
$ u\in {H}^{1,0}_{0}(Q) $ of (\ref{LS:eq:SpaceTimeVF})
belongs to the space
$$H^{\mathcal{L},1}_0(Q) = \{v \in H^1_0(Q) : \mathcal{L}_x v :=  \mbox{div}_x(\nu \nabla_x  u) \in L_2(Q) \}.$$
Recently, Dier \cite{LS:Dier:2015a}
proved maximal parabolic regularity provided that the corresponding 
elliptic form is of bounded variation. In our case, this means that $\nu$ is of bounded variation in $t$ for almost 
all $x$.
In the case of maximal parabolic regularity, we have $\partial_t u -\mathcal{L}_x u = f$ in $L_2(Q)$ 
that was the starting point for the derivation of consistent space-time finite element schemes 
in \cite{LS:LangerNeumuellerSchafelner:2019a}.

In general, for non-trivial $\mathbf{f} \in [L_2(Q)]^d$ (with jumps), we cannot expect maximal parabolic regularity.
Motivated by applications, we assume that there exist  a decomposition of 
the space-time cylinder $ \overline{Q} = \bigcup_{i=1}^m \overline{Q_i} $ 
into $m$ non-overlapping Lipschitz domains $Q_1,\ldots,Q_m$ such that 
the restriction of $\mathbf{f}$ to $Q_i$ belongs to $H(\mbox{div}_x,Q_i)$ for all $i=1,2,\ldots,m$, 
but may have jumps across the boundaries of these subdomains.
Now, taking an arbitrary test function $v \in H^{1,1}_{0,\overline{0}}(Q)$ with a compact support in $Q_i$,
we immediately  get
\begin{equation}
\label{LS:eq:STVFinQi}
 \int_{Q_i} (-u \partial_{t}v + \nu \nabla_{x}u\cdot\nabla_{x}v) \;\mathrm{d}Q 
 = \int_{Q_i} (f\,v + \mathbf{f}\cdot\nabla_{x} v)\;\mathrm{d}Q.
\end{equation}
from the space-time variational formulation (\ref{LS:eq:SpaceTimeVF}).
Rewriting the left-hand side and applying integration by parts on the right-hand side 
in (\ref{LS:eq:STVFinQi}), 
we obtain
\begin{equation}
\int_{Q_i} \begin{pmatrix}
\nu\nabla_{x}u\\ -u
\end{pmatrix}\cdot\nabla v \;\mathrm{d}Q = \int_{Q_i} (f - \mathrm{div}_x \mathbf{f})v\;\mathrm{d}Q,
\end{equation}
where $\nabla = \begin{pmatrix}\nabla_x^\top & \partial_t \end{pmatrix}^\top$ is the space-time gradient.
This is nothing but the definition of the weak space-time divergence, i.e., we have 
\begin{equation}
\label{LS:eq:div}
    \mathrm{div}\begin{pmatrix} -\nu\nabla_{x}u\\u\end{pmatrix} = f - \mathrm{div}_x\mathbf{f}\quad\text{in}\ L_2(Q_i).
\end{equation}
The solution
$u \in \mathring{V}_2^{1,0}(Q)$ of (\ref{LS:eq:SpaceTimeVF})
also satisfies the integral identity; 
see \cite{LS:Ladyzhenskaya:1985a}, Chapter~3, identity (3.20) on p. 120,
\begin{equation}
\label{LS:eq:SpaceTimeVFinV10}
\begin{aligned}
\int_{Q} (-u \partial_{t}v + \nu \nabla_{x}u\cdot\nabla_{x}v) \;\mathrm{d}Q 
+ \int_\Omega u(x,T)\,v(x,T)\;\mathrm{d}x= \\
    \int_{Q} (f\,v + \mathbf{f}\cdot\nabla_{x}v)\;\mathrm{d}Q + \int_\Omega u_0(x)\,v(x,0)\;\mathrm{d}x
   ,\ \forall v \in H^{1}_{0}(Q),
\end{aligned}
\end{equation}
from which one can derive the space-time flux relation
\begin{equation}
\label{LS:eq:SpaceTimeFluxH1}
\begin{aligned}
    \sum_{i=1}^m \int_{\partial Q_i}\!\begin{pmatrix}\nu\nabla_{x}u\\-u\end{pmatrix}\cdot\begin{pmatrix} \mathbf{n}_x\\{n}_t\end{pmatrix}v\;\mathrm{d}s + \int_\Omega u(x,T)\,v(x,T)\;\mathrm{d}x\\
    = 
    \sum_{i=1}^m \int_{\partial Q_i}\! \mathbf{f}\cdot\mathbf{n}_x\, v\;\mathrm{d}s + \int_\Omega u_0\,v(x,0)\;\mathrm{d}x,\;\forall v \in H^{1}_{0}(Q).
\end{aligned}    
\end{equation}
Indeed, in (\ref{LS:eq:SpaceTimeVFinV10}), we can rewrite the integrals over $Q$ as sum of 
integrals over $Q_i$. Then we have the variational identity
\begin{align*}
\sum_{i=1}^m \int_{Q_i}\! \begin{pmatrix}\nu\nabla_{x}u\\ -u\end{pmatrix}\cdot\nabla v \;\mathrm{d}Q + \int_\Omega u(x,T)\,v(x,T)\;\mathrm{d}x\\
=
\sum_{i=1}^m \int_{Q_i}\! (f\,v + \mathbf{f}\cdot\nabla_{x}v)\;\mathrm{d}Q 
+ \int_\Omega u_0\,v(x,0)\;\mathrm{d}x
\end{align*}
for all $v\in H^1_{0}(Q) = H^{1,1}_{0}(Q)$.
Integrating by parts in the left-hand side and in the right-hand side gives the identity
\begin{align}
\label{LS:eq:VariationalIdentity}
\nonumber
    \sum_{i=1}^m \left(\int_{Q_i}\!\mathrm{div}\begin{pmatrix}-\nu\nabla_{x}u\\u\end{pmatrix} v\;\mathrm{d}Q
    + \int_{\partial Q_i}\!\begin{pmatrix}\nu\nabla_{x}u\\-u\end{pmatrix}\cdot\begin{pmatrix} \mathbf{n}_x\\{n}_t\end{pmatrix}v\;\mathrm{d}s\right)\\ \nonumber
    + \int_\Omega u(x,T)\,v(x,T)\;\mathrm{d}x\\
    = 
    \sum_{i=1}^m\left(\int_{Q_i}\! (f-\mathrm{div}_x \mathbf{f})v\;\mathrm{d}Q + \int_{\partial Q_i}\! \mathbf{f}\cdot\mathbf{n}_x\, v\;\mathrm{d}s\right) + \int_\Omega u_0\,v(x,0)\;\mathrm{d}x,
\end{align}
that holds for all $v\in H^{1}_{0}(Q)$. We mention that the integral over $\partial Q_i$ have to be 
understood as duality products on $H^{-1/2}(\partial Q_i) \times H^{1/2}(\partial Q_i)$.
Using (\ref{LS:eq:div}) in (\ref{LS:eq:VariationalIdentity}), we arrive at the space-time flux 
relation (\ref{LS:eq:SpaceTimeFluxH1}).
%

\section{Space-time finite element methods}
\label{LS:Section:SpaceTimeFEM}

Next, we need a decomposition (triangulation) $ \mathcal{T}_h $ of $ Q $ into shape-regular finite elements $K$ 
such that $\overline{Q} = \bigcup_{K\in\mathcal{T}_h} \overline{K}$ and $K\cap K'=\emptyset$ 
for all $K$ and $K'$ from $ \mathcal{T}_h $ with $K \neq K'$; see \cite{LS:Braess:2013a,LS:BrennerScott:2008a,LS:Ciarlet:1978a} 
for a precise definition of shape-regular triangulations.
We assume that the triangulation is aligned with $Q_i$, i.e. 
$\overline{Q}_i = \cup_{K \in \mathcal{T}_{i,h}}\overline{K} $ can be represented by a subset 
$ \mathcal{T}_{i,h}$ of the triangulation $ \mathcal{T}_{h} = \cup_{i=1}^m \mathcal{T}_{i,h}$.

On the basis of the shape-regular triangulation $ \mathcal{T}_h $, we define the space-time finite element space
\begin{equation}
\label{LS:eq:V0h}
 V_{0h} = \{ v\in C(\overline{Q})  : v(x_K(\cdot)) \in\mathbb{P}_p(\hat{K}),\,
\forall  K \in \mathcal{T}_h,\, v=0\;\mbox{on}\; {\overline \Sigma}
\}
\end{equation}
as usual \cite{LS:Braess:2013a,LS:BrennerScott:2008a,LS:Ciarlet:1978a},
where $x_K(\cdot)$ denotes the map from the reference element $\hat{K}$ to the finite element 
$K \in \mathcal{T}_h$, 
and $\mathbb{P}_p(\hat{K})$ is the space of polynomials of the degree $p$ on the reference element $\hat{K}$. The space-time finite elements space $V_{0h}$ is obviously a finite dimensional subspace 
of the space $H^1_0(Q) \subset \mathring{V}_2^{1,0}(Q)$.
Let us denote the dimension of $V_{0h}$ by $N_h$.
For simplicity, throughout this paper and, in particular, in our numerical experiments 
in Section~\ref{LS:Section:NumericalResults}, 
we use affine-linear mappings $x_K(\cdot)$ and 
simplicial elements $K$.

Following \cite{LS:LangerNeumuellerSchafelner:2019a}, we first multiply \eqref{LS:eq:div}, 
that is valid in $L_2(K)$ for all  $K \in \mathcal{T}_h$ since the triangulation $\mathcal{T}_h$ 
is align with $Q_i$, by a locally scaled 
upwind test function
\begin{equation*}
v_{h,K}(x,t) := v_h(x,t) + \theta_K h_K \partial_{t} v_h(x,t),\; v_h \in  V_{0h},
\end{equation*}
then integrate over $K$, and finally sum over all $K \in \mathcal{T}_h$, giving the identity
\begin{align*}
    \sum_{K \in \mathcal{T}_h} \int_{K}\!\mathrm{div}\begin{pmatrix}-\nu\nabla_{x}u\\u\end{pmatrix} 
    (v_h + \theta_K h_K \partial_{t} v_h) \;\mathrm{d}Q\\
    = 
    \sum_{K \in \mathcal{T}_h} \int_{K}\! (f-\mathrm{div}_x \mathbf{f})(v_h + \theta_K h_K \partial_{t} v_h)\;\mathrm{d}Q 
\end{align*}
Integration by parts yields
\begin{align*}
    \sum_{K \in \mathcal{T}_h} \int_{K}\!
    \left[
    \nu \nabla_x u \cdot \nabla_{x}v_h - u \partial_t v_h 
    + \theta_K h_K \mathrm{div}\begin{pmatrix}-\nu\nabla_{x}u\\u\end{pmatrix} 
    \partial_{t} v_h \;\mathrm{d}Q 
    \right]\\
    + \sum_{K \in \mathcal{T}_h} \int_{\partial K}\!
        \begin{pmatrix}-\nu\nabla_{x}u\\u\end{pmatrix} \cdot  
        \begin{pmatrix}n_{x}\\n_t\end{pmatrix} v_h\;\mathrm{d}s\\
    = 
    \sum_{K \in \mathcal{T}_h} \int_{K}\! 
    \left[
    f v_h + \mathbf{f} \cdot \nabla_x v_h + \theta_K h_K f \partial_{t} v_h 
            - \theta_K h_K\mathrm{div}_x \mathbf{f} \partial_{t} v_h 
    \right]  \;\mathrm{d}Q\\
    - \sum_{K \in \mathcal{T}_h} \int_{\partial K}\! \mathbf{f} \cdot n_x v_h \;\mathrm{d}s
\end{align*}
Since the triangulation $\mathcal{T}_h$ is aligned with the interfaces generated by the 
decomposition of $Q$ into the space-time subdomains $Q_1$, $Q_2$, \ldots, $Q_m$,
the space-time flux identity (\ref{LS:eq:SpaceTimeFluxH1}) is also valid if one replaces
the subdomains $Q_i$ by the finite elements $K$. Using (\ref{LS:eq:SpaceTimeFluxH1}) 
with $Q_i$ replaced by $K$ in the last identity, we immediately obtain 
the consistency relation
\begin{equation}\label{LS:eq:ConsistencyIdentity}
	a_h(u,v_h) = l_h(v_h) \quad\forall v_h\in V_{0h}, 
\end{equation}
with
\begin{align}
\label{LS:eq:bilinearform}
    a_h(u,v_h) :=& 
    \sum_{K\in\mathcal{T}_h} \int_{K}\left[\nu\nabla_{x}u \cdot \nabla_{x}v_h - u \partial_t v_h 
        + \theta_{K}h_K \mbox{div} \begin{pmatrix}-\nu\nabla_{x}u\\u\end{pmatrix}\partial_t v_h\right] \;\mathrm{d}Q\\ \nonumber
        & + \int_{\Omega} u(x,T) v_h(x,T) \mathrm{d}x,\\ \nonumber
     l_h(v_h) :=& \sum_{K\in\mathcal{T}_h}\int_{K}\left[fv_h + \mathbf{f}\cdot\nabla_{x}v_h  
                        + \theta_{K}h_K f \partial_t v_h
                        - \theta_{K}h_K \mbox{div}_x( \mathbf{f}) \, \partial_t v_h \right] \;\mathrm{d}Q\\  \nonumber
                        & + \int_{\Omega} u_0(x) v_h(x,0) \mathrm{d}x.
\end{align}
Therefore, we arrive at the following consistent space-time finite element scheme
for solving the initial-boundary value problem (\ref{LS:eq:SpaceTimeVF}):
find $ u_h\in V_{0h} $ such that 
\begin{equation}
\label{LS:eq:SpaceTimeFiniteElementScheme}
	a_h(u_h,v_h) = l_h(v_h) \quad\forall v_h\in V_{0h}. 
\end{equation}
Subtracting the space-time finite element scheme (\ref{LS:eq:SpaceTimeFiniteElementScheme})
form the consistency relation \eqref{LS:eq:ConsistencyIdentity}, we get the Galerkin orthogonality
relation
\begin{equation}
\label{LS:eq:GalerkinOrthogonality}
	a_h(u-u_h,v_h) = 0 \quad\forall v_h\in V_{0h},
\end{equation}
that is crucial for deriving a priori discretization error estimates in the next section.
We now show that the bilinear form $a_h(\cdot,\cdot)$ is coercive on $V_{0h}$ 
with respect to the norm
\begin{align}
\label{LS:eq:Norm_h}
\nonumber
  \|v_h\|_{h}^2 := \sum_{K\in\mathcal{T}_h} \bigl[ \|\nu^{1/2}\nabla_{x}v_h\|_{L_2(K)}^2 + 
                \theta_Kh_K\|\partial_tv_h\|_{L_2(K)}^2 \bigl]\\ 
                + \|v_h(\cdot,T)\|_{L_2(\Omega)}^2 + \|v_h(\cdot,0)\|_{L_2(\Omega)}^2.
\end{align}
\begin{lemma}
\label{LS:lemma:Coercivity}
If $\theta_K$ is chosen sufficiently small, then there exist a positive 
constant $\mu_c$ such that
\begin{equation}
\label{LS:eq:Coercivity}
 a_h(v_h,v_h) \ge \mu_c \, \|v_h\|_h^2, \; \forall v_h \in V_{0h}.
\end{equation}
More precisely, if $\theta_K \le h_K / c_{I,K,\nu}^2$ for all $K \in \mathcal{T}_h$, then $\mu_c$ can be chosen $1/2$,
where $c_{I,K}(\nu)$ denotes the constant from the inverse inequality
\begin{equation}
\label{LS:eq:InverseInequality}
    \|\mbox{div}_x(\nu \nabla_x v_h)\|_{L_2(K)} \le c_{I,K,\nu}\, h_k^{-1} \|\nu^{1/2} \nabla_x v_h\|_{L_2(K)}
\end{equation}
that holds for all $v_h \in V_{0h}$ and $K \in \mathcal{T}_h$; see \cite{LS:LangerNeumuellerSchafelner:2019a}.
\end{lemma}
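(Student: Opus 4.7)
The plan is to test the bilinear form against $v_h$ itself and to identify four contributions: the $H^1_x$-type term $\nu|\nabla_x v_h|^2$, the time-derivative term $-v_h\partial_tv_h$, the streamline stabilization term $\theta_Kh_K|\partial_tv_h|^2$ and a cross term coming from $-\theta_Kh_K\,\mathrm{div}_x(\nu\nabla_xv_h)\,\partial_tv_h$, plus the end-time boundary term $\|v_h(\cdot,T)\|_{L_2(\Omega)}^2$. Three of the four volume contributions already appear with the correct sign in $\|v_h\|_h^2$, so the entire difficulty will be to (i) produce the $\|v_h(\cdot,0)\|_{L_2(\Omega)}^2$ term out of $-v_h\partial_tv_h$, and (ii) absorb the cross term.

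First, I would rewrite $-v_h\partial_tv_h=-\tfrac12\partial_t(v_h^2)$ on each $K$; since $v_h\in V_{0h}$ is continuous on $\overline{Q}$ and vanishes on $\overline{\Sigma}$, summing the resulting element-wise integrations by parts in $t$ telescopes and leaves only the top and bottom of the cylinder, yielding
\begin{equation*}
-\sum_{K\in\mathcal{T}_h}\int_K v_h\,\partial_tv_h\,\mathrm{d}Q
=\tfrac12\|v_h(\cdot,0)\|_{L_2(\Omega)}^2-\tfrac12\|v_h(\cdot,T)\|_{L_2(\Omega)}^2.
\end{equation*}
Combined with $\int_\Omega v_h(x,T)^2\,\mathrm{d}x$ already present in $a_h(v_h,v_h)$, this produces $+\tfrac12\|v_h(\cdot,T)\|_{L_2(\Omega)}^2+\tfrac12\|v_h(\cdot,0)\|_{L_2(\Omega)}^2$, matching the last two pieces of $\|v_h\|_h^2$.

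Next, I would bound the cross term elementwise. By Cauchy--Schwarz followed by the inverse inequality \eqref{LS:eq:InverseInequality} and then Young's inequality, one obtains
\begin{equation*}
\bigl|\theta_Kh_K\!\int_K\mathrm{div}_x(\nu\nabla_xv_h)\,\partial_tv_h\,\mathrm{d}Q\bigr|
\le\tfrac12\tfrac{c_{I,K,\nu}^2\theta_K}{h_K}\,\|\nu^{1/2}\nabla_xv_h\|_{L_2(K)}^2
+\tfrac12\theta_Kh_K\,\|\partial_tv_h\|_{L_2(K)}^2,
\end{equation*}
where the square-root weights $\sqrt{\theta_K/h_K}$ and $\sqrt{\theta_Kh_K}$ have been split deliberately so that the two resulting terms match exactly the gradient and stabilization contributions in $\|v_h\|_h^2$. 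Under the hypothesis $\theta_K\le h_K/c_{I,K,\nu}^2$, the prefactor $c_{I,K,\nu}^2\theta_K/h_K$ is at most one, so the cross term consumes at most $\tfrac12$ of both the $\|\nu^{1/2}\nabla_xv_h\|_{L_2(K)}^2$ and the $\theta_Kh_K\|\partial_tv_h\|_{L_2(K)}^2$ present in $a_h(v_h,v_h)$.

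Summing over $K$ and collecting everything yields $a_h(v_h,v_h)\ge\tfrac12\|v_h\|_h^2$, which is the claim with $\mu_c=1/2$. The only delicate point is the balancing in Young's inequality: the choice of how to split the weights is what forces the condition $\theta_K\le h_K/c_{I,K,\nu}^2$ to give exactly $\mu_c=1/2$, and any cruder splitting would require a strictly smaller stabilization parameter or would yield a smaller coercivity constant.
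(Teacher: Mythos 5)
Your argument is correct and follows essentially the same route as the paper's proof: split off the time-derivative term to produce the $\tfrac12\|v_h(\cdot,0)\|_{L_2(\Omega)}^2+\tfrac12\|v_h(\cdot,T)\|_{L_2(\Omega)}^2$ contributions, keep the $\theta_K h_K\|\partial_t v_h\|_{L_2(K)}^2$ term, and absorb the cross term $-\theta_K h_K\int_K\mathrm{div}_x(\nu\nabla_x v_h)\,\partial_t v_h\,\mathrm{d}Q$ via Cauchy--Schwarz, the inverse inequality \eqref{LS:eq:InverseInequality}, and a balanced Young inequality. Your balanced splitting corresponds exactly to the paper's choice $\varepsilon=1$, so the resulting constant $\mu_c=1/2$ under $\theta_K\le h_K/c_{I,K,\nu}^2$ coincides with the paper's.
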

\begin{proof}
For all $v_h \in V_{0h}$, we have the following representation of the bilinear form $a_h(v_h,v_h)$:
\begin{align}
\label{LS:eq:ah(vh,vh)}
\nonumber
 a_h(v_h,v_h) =& \sum_{K\in\mathcal{T}_h} \int_{K}\left[|\nu^{1/2}\nabla_{x}v_h|^2 - \frac{1}{2}\partial_t v_h^2 
        + \theta_{K}h_K \mbox{div} \begin{pmatrix}-\nu\nabla_{x}v_h\\v_h\end{pmatrix}\partial_t v_h\right] 
        \, \mathrm{d}Q\notag\\
               & +\int_{\Omega} |v_h(x,T)|^2\, \mathrm{d}x\notag\\
            =  & \sum_{K\in\mathcal{T}_h} \int_{K} \left[|\nu^{1/2}\nabla_{x}v_h|^2 
                                                            + \theta_{K}h_K |\partial_t v_h|^2 \right] \,\mathrm{d}Q 
                                                            + \frac{1}{2}\int_{\Omega} |v_h(x,T)|^2\, \mathrm{d}x\notag\\
               & + \frac{1}{2}\int_{\Omega} |v_h(x,0)|^2\, \mathrm{d}x 
                 - \sum_{K\in\mathcal{T}_h} \theta_{K}h_K  \int_{K}\mbox{div}_x(\nu\nabla_{x}v_h)\partial_t v_h \, \mathrm{d}Q
\end{align}
Estimating the last term by Cauchy's inequality, Young's inequality, and the inverse inequality (\ref{LS:eq:InverseInequality}),
we immediately get 
\begin{align*}
 a_h(v_h,v_h) \ge& \sum_{K\in\mathcal{T}_h} 
                    \left[(1 - \frac{\varepsilon \theta_{K} c_{I,K,\nu}^2}{2 h_K}) \|\nu^{1/2}\nabla_{x}v_h\|^2_{L_2(K)}
                                            + (1 - \frac{1}{2\varepsilon})\theta_{K} h_K \|\partial_t v_h\|^2_{L_2(K)} \right]\\
               & + \frac{1}{2}\|v_h(\cdot,T)\|^2_{L_2(\Omega)} + \frac{1}{2}\|v_h(\cdot,0)\|^2_{L_2(\Omega)} 
               , \; \forall v_h \in V_{0h},
\end{align*}
from which (\ref{LS:lemma:Coercivity}) follows for $\varepsilon =1$ and $\theta_K \le h_k / c_{I,K,\nu}^2$ for all $K \in \mathcal{T}_h$.
\end{proof}
\begin{remark}
For the special case that $p=1$ (linear shape functions) and $\nu = \nu_K = \mbox{const} $ for all $K \in \mathcal{T}_h$,
the last term in (\ref{LS:eq:ah(vh,vh)}) vanishes since 
$\mbox{div}_x(\nu\nabla_{x}v_h) = \nu_K \Delta_x v_h = 0$ on $K$. 
Therefore, we have 
\begin{align}
\label{LS:eq:ah(vh,vh):p=1}
\nonumber
 a_h(v_h,v_h) 
            =  & \sum_{K\in\mathcal{T}_h} \int_{K} \left[|\nu^{1/2}\nabla_{x}v_h|^2 
                                                            + \theta_{K}h_K |\partial_t v_h|^2 \right] \,\mathrm{d}Q 
                                                            + \frac{1}{2}\int_{\Omega} |v_h(x,T)|^2\, \mathrm{d}x\notag\\
               & + \frac{1}{2}\int_{\Omega} |v_h(x,0)|^2\, \mathrm{d}x 
                 =: \||v_h\||^2_h , \; \forall v_h \in V_{0h},
\end{align}
i.e. the bilinear form $a_h(\cdot,\cdot)$ is  coercive on $V_{0h}$ wrt the norm $\||\cdot\||_h$ 
with the constant $\mu_c =1$.
\end{remark}

The coercivity of the bilinear form $a_h(\cdot,\cdot)$ on $V_{0h}$ yields uniqueness,
and, in the finite-dimensional case, uniqueness implies existence.
Thus, the space-time finite element scheme (\ref{LS:eq:SpaceTimeFiniteElementScheme})
has a unique solution $u_h \in V_{0h}$ that can be found by solving one linear system 
of algebraic equations. 
Once the basis is chosen, the space-time finite element scheme (\ref{LS:eq:SpaceTimeFiniteElementScheme})
is nothing but one system of linear algebraic equations. 
Indeed, let 
$ \{p^{(j)}:j=1,\ldots,N_h\}$
be the finite element nodal basis of $ V_{0h}$, 
i.e., $V_{0h} = \mbox{span}\{p^{(1)},\ldots,p^{(N_h)} \}$,
where $N_h$ is the number of all space-time unknowns (dofs).
Then we can express the approximate solution $ u_h $ in terms of this basis, 
i.e., $ u_{h}(x,t) = \sum_{j=1}^{N_h} u_j\,p^{(j)}(x,t) $.
Inserting this ansatz into (\ref{LS:eq:SpaceTimeFiniteElementScheme}) and testing with $p^{(i)}$,
we get the linear system
\begin{equation}
\label{LS:eq:LinearSystem}
{K}_{h} \underline{u}_{h}= \underline{f}_{h},
\end{equation}
for determining the unknown coefficient vector $\underline{u}_{h}= (u_j)_{j=1,\ldots,N_h} \in \mathbb{R}^{N_h} $,
where $ {K}_{h} = (a_h(p^{(j)},p^{(i)}))_{i,j=1,\ldots,N_h} $ and 
$ \underline{f}_{h} = (l_h(p^{(i)}))_{i=1,\ldots,N_h}$.
The system matrix $ {K}_{h}$ is non-symmetric, but positive definite due to Lemma~\ref{LS:lemma:Coercivity}.

\section{A priori error estimates}
\label{LS:Section:Apriori}

We now introduce the norm 
\begin{align}
\nonumber
    \|u\|_{h,*}^2 :=& \sum_{K\in\mathcal{T}_h} \left[ \|\nu^{1/2}\nabla_{x}u\|_{L_2(K)}^2 
    + (\theta_Kh_K)^{-1}\|u\|_{L_2(K)}^2\right.\\ \nonumber
    & +\left.  \theta_Kh_K\|\mathrm{div} \begin{pmatrix}-\nu\nabla_{x}u\\u\end{pmatrix}\|_{L_2(K)}^2 \right] 
    + \|u(\cdot,T)\|^2_{L_2(\Omega)}
\end{align}
that is defined on $ V_{0h,*} = H^{\mathcal{L}}_{0}(\mathcal{Q}) + V_{0h}$ to which the solution $u$ and the 
discretization error $u - u_h$ belong. The solution space $H^{\mathcal{L}}_{0}(\mathcal{Q})$ can be chosen as
$\{v \in H^{1,0}_0(Q):\, 
\mathcal{L} u := \mathrm{div} (-\nu\nabla_{x}u,u) 
\in L_2(Q_i),\,
i=1,\ldots,m\}$.
\begin{lemma}
\label{LS:lemma:Boundedness}
    The bilinear form \eqref{LS:eq:bilinearform} is bounded on $ V_{0h,*}\times V_{0h} $, i.e.,
    \begin{equation}\label{LS:eq:GeneralizedBoundedness}
        |a_h(u,v_h)| \le \mu_b \, \|u\|_{h,*} \|v_h\|_h, \; \forall u \in V_{0h,*}, v_h \in V_{0h}
    \end{equation}
with $\mu_b = 1$.
\end{lemma}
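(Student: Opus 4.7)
My plan is a direct Cauchy--Schwarz argument. I would bound each of the four integrals comprising $a_h(u,v_h)$ in \eqref{LS:eq:bilinearform} separately, inserting weights of the form $(\theta_K h_K)^{\pm 1/2}$ so that every resulting factor matches either a summand of $\|u\|_{h,*}^2$ or of $\|v_h\|_h^2$, and then aggregate the per-element bounds via a single discrete Cauchy--Schwarz.

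Concretely, on each element $K\in\mathcal{T}_h$ the elliptic integrand is bounded by Cauchy--Schwarz in $L_2(K)$ as $\|\nu^{1/2}\nabla_x u\|_{L_2(K)}\,\|\nu^{1/2}\nabla_x v_h\|_{L_2(K)}$; for the convective integrand, inserting the trivial weight $1 = (\theta_K h_K)^{-1/2}(\theta_K h_K)^{1/2}$ yields the estimate $(\theta_K h_K)^{-1/2}\|u\|_{L_2(K)}\cdot(\theta_K h_K)^{1/2}\|\partial_t v_h\|_{L_2(K)}$; and for the stabilization integrand, splitting $\theta_K h_K = (\theta_K h_K)^{1/2}(\theta_K h_K)^{1/2}$ symmetrically produces $(\theta_K h_K)^{1/2}\|\mathrm{div}(-\nu\nabla_x u,u)^\top\|_{L_2(K)}\cdot(\theta_K h_K)^{1/2}\|\partial_t v_h\|_{L_2(K)}$. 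The terminal face contribution is controlled directly by $\|u(\cdot,T)\|_{L_2(\Omega)}\,\|v_h(\cdot,T)\|_{L_2(\Omega)}$.

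These factors are exactly the components of $\|u\|_{h,*}$ on the $u$ side and of $\|v_h\|_h$ on the $v_h$ side, so I would assemble them into two vectors in an auxiliary Euclidean space and conclude by a single discrete Cauchy--Schwarz. The delicate point, and the main obstacle I anticipate in nailing down the sharp constant $\mu_b = 1$ rather than $\sqrt{2}$, is that the factor $(\theta_K h_K)^{1/2}\|\partial_t v_h\|_{L_2(K)}$ appears in two of the per-element estimates (the convective and stabilization ones) but contributes only once to $\|v_h\|_h^2$. To remove this duplication I would group the convective and stabilization integrands before invoking Cauchy--Schwarz, treating $\theta_K h_K\,\mathrm{div}(-\nu\nabla_x u,u)^\top - u$ as a single factor paired against $\partial_t v_h$, and absorb any residual slack into the unused $\|v_h(\cdot,0)\|_{L_2(\Omega)}^2$ summand of $\|v_h\|_h^2$. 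The argument is entirely algebraic; neither the PDE structure nor the coercivity lemma is used.
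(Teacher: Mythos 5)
Your argument is essentially the paper's own proof: the authors bound the four contributions of $a_h(u,v_h)$ element-wise by exactly the weighted Cauchy--Schwarz estimates you describe (elliptic term, $(\theta_K h_K)^{\mp 1/2}$-weighted convective term, stabilization term, terminal term) and then aggregate by a discrete Cauchy--Schwarz, asserting $\mu_b=1$. So in substance you have reproduced the intended proof.

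The delicate point you flag is, however, real, and neither of your proposed repairs closes it. Aggregating honestly, the factor $(\theta_K h_K)^{1/2}\|\partial_t v_h\|_{L_2(K)}$ is paired with both $(\theta_K h_K)^{-1/2}\|u\|_{L_2(K)}$ and $(\theta_K h_K)^{1/2}\|\mathrm{div}(-\nu\nabla_x u,u)^{\top}\|_{L_2(K)}$, so the $v_h$-side vector in the discrete Cauchy--Schwarz carries this entry twice while $\|v_h\|_h^2$ counts it once; this yields $\mu_b=\sqrt{2}$, not $1$. Your first fix (pairing $\theta_K h_K\,\mathrm{div}(-\nu\nabla_x u,u)^{\top}-u$ as one factor against $\partial_t v_h$) does not help, because to compare the grouped factor with the two separate summands of $\|u\|_{h,*}^2$ you must use the triangle inequality followed by Cauchy--Schwarz, which reintroduces the same $\sqrt{2}$. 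Your second fix (absorbing slack into $\|v_h(\cdot,0)\|_{L_2(\Omega)}^2$) cannot work either: that term is not present in $a_h(u,v_h)$ and can vanish for the very $v_h$ that saturates the other terms, so there is nothing to absorb into it. The honest conclusion of this line of argument is the lemma with $\mu_b=\sqrt{2}$ --- which is all that is needed downstream, since only the ratio $\mu_b/\mu_c$ enters the C\'ea-type estimate; the sharper claim $\mu_b=1$ is not actually established by the computation in the paper either, as its final step silently performs the same aggregation you were rightly suspicious of.
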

\begin{proof}
Using Cauchy's inequalities, we immeadiately get   
    \begin{align*}
        a_h(u,v_h) =& \sum_{K\in\mathcal{T}_h} \int_{K}\!\nu\nabla_{x}u\cdot\nabla_{x}v_h - u\partial_{t}v_h + \theta_Kh_K \mathrm{div}\begin{pmatrix}-\nu\nabla_{x}u\\u\end{pmatrix}\partial_{t}v_h \mathrm{d}Q \\
        &\qquad+ \int_{\Omega}u(x,T)v_h(x,T)\mathrm{d}x \\
        \leq& \sum_{K\in\mathcal{T}_h} \bigg[\left(\|\nu^{1/2}\nabla_{x}u\|_{L_2(K)}^2\right)^{1/2}\left(\|\nu^{1/2}\nabla_{x}v_h\|_{L_2(K)}^2\right)^{1/2} \\
        &\qquad+ \left((\theta_Kh_K)^{-1}\|u\|_{L_2(K)}^2\right)^{1/2}\left(\theta_Kh_K \|\partial_{t}v_h\|_{L_2(K)}^2\right)^{1/2} \\
        &\qquad+ \left(\theta_Kh_K \|\mathrm{div}\begin{pmatrix}-\nu\nabla_{x}u\\u\end{pmatrix}\|_{L_2(K)}^2\right)^{1/2} \left(\theta_Kh_K \|\partial_{t}v_h\|_{L_2(K)}^2 \right)^{1/2} \bigg] \\
        &\qquad+ \left(\|u(\cdot,T)\|_{L_2(\Omega)}^2\right)^{1/2} \left(\|v_h(\cdot,T)\|_{L_2(\Omega)}^2\right)^{1/2}\\
        \leq& \mu_b \|u\|_{h,*} \|v_h\|_{h}
    \end{align*}
with $\mu_b = 1$.
\end{proof}
The Galerkin orthogonality \eqref{LS:eq:GalerkinOrthogonality} together with the coercivity \eqref{LS:eq:Coercivity} 
and the generalized boundedness \eqref{LS:eq:GeneralizedBoundedness} of the bilinear form $a_h(\cdot, \cdot)$ 
immediately yield a best-approximation 
(C\'{e}a-like) 
discretization error estimate in the 
corresponding norms.
\begin{lemma}
\label{LS:lemma:Cea}
Let 
$u \in H^{\mathcal{L}}_{0}(\mathcal{Q})$
and $u_h \in V_{0h}$ be the solutions of the parabolic IBVP \eqref{LS:eq:SpaceTimeVF} 
and the space-time finite element scheme \eqref{LS:eq:SpaceTimeFiniteElementScheme}, respectively,
and let the assumptions of Lemma~\ref{LS:lemma:Coercivity} (coercivity) Lemma~\ref{LS:lemma:Boundedness} (boundedness) hold.
Then the discretization error estimate 
\begin{equation}
\label{LS:eq:Cea}
\|u-u_h\|_h \leq \inf_{v_h\in V_{0h}} \left(\|u-v_h\|_h + \frac{\mu_b}{\mu_c}\|u-v_h\|_{h,*}\right)
\end{equation}
holds.
\end{lemma}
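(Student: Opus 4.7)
The plan is to follow the standard Cea-lemma template adapted to the situation where we have coercivity only on the discrete space but a generalized boundedness spanning the larger space $V_{0h,*}$. The key observation is that the error $u - u_h$ lies in $V_{0h,*}$ (so boundedness applies when the first argument is such an error), while differences of discrete functions lie in $V_{0h}$ (so coercivity applies).

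First I would fix an arbitrary $v_h \in V_{0h}$ and split the error via the discrete auxiliary quantity $w_h := u_h - v_h \in V_{0h}$. Applying coercivity from Lemma~\ref{LS:lemma:Coercivity} gives
\begin{equation*}
\mu_c \|w_h\|_h^2 \leq a_h(w_h, w_h) = a_h(u_h - u, w_h) + a_h(u - v_h, w_h).
\end{equation*}
The first term vanishes by the Galerkin orthogonality \eqref{LS:eq:GalerkinOrthogonality}, since $w_h \in V_{0h}$. For the second term, I would invoke the generalized boundedness from Lemma~\ref{LS:lemma:Boundedness}, which is legitimate because $u - v_h \in V_{0h,*}$ (both $u \in H^{\mathcal{L}}_0(\mathcal{Q})$ and $v_h \in V_{0h}$ lie in $V_{0h,*}$), giving
\begin{equation*}
a_h(u - v_h, w_h) \leq \mu_b \|u - v_h\|_{h,*} \|w_h\|_h.
\end{equation*}
Dividing by $\mu_c \|w_h\|_h$ (treating the trivial case $w_h = 0$ separately) yields $\|u_h - v_h\|_h \leq (\mu_b/\mu_c)\|u - v_h\|_{h,*}$.

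Finally, I would apply the triangle inequality with respect to $\|\cdot\|_h$,
\begin{equation*}
\|u - u_h\|_h \leq \|u - v_h\|_h + \|v_h - u_h\|_h \leq \|u - v_h\|_h + \frac{\mu_b}{\mu_c}\|u - v_h\|_{h,*},
\end{equation*}
and take the infimum over $v_h \in V_{0h}$ to conclude \eqref{LS:eq:Cea}.

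I do not anticipate a genuine obstacle; the argument is routine once Lemmas~\ref{LS:lemma:Coercivity} and~\ref{LS:lemma:Boundedness} are in place. The only point that requires a moment of care is verifying that $u - v_h \in V_{0h,*}$ so that the asymmetric boundedness is applicable, and that $\|\cdot\|_h$ is well defined (and finite) on $u - v_h$ so that the triangle inequality step makes sense; both follow from the definition $V_{0h,*} = H^{\mathcal{L}}_0(\mathcal{Q}) + V_{0h}$ and the inclusion $\|v\|_h \leq \|v\|_{h,*}$ implicit in the norm definitions.
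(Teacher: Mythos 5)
Your argument is correct and follows essentially the same route as the paper's proof: coercivity applied to the discrete difference $v_h-u_h$, Galerkin orthogonality to swap in $u-v_h$ as the first argument, the generalized boundedness on $V_{0h,*}\times V_{0h}$, division by $\|v_h-u_h\|_h$, and the triangle inequality with an infimum over $v_h$. Your write-up is in fact slightly more careful than the paper's one-line chain (handling the case $w_h=0$ and checking $u-v_h\in V_{0h,*}$ explicitly), so no changes are needed.
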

\begin{proof}
Using \eqref{LS:eq:Coercivity}, \eqref{LS:eq:GalerkinOrthogonality}, and
\eqref{LS:eq:GeneralizedBoundedness}, we immediately get the estimates
\begin{equation*}
 \mu_c \|v_h-u_h\|_h^2 \le a_h(v_h-u_h,v_h-u_h) = a_h(u-u_h,v_h-u_h) \le \mu_b \|u-u_h\|_h\|u-v_h\|_{h,*}
\end{equation*}
for any $v_h\in V_{0h}$, from which \eqref{LS:eq:Cea} follows by triangle inequality.
\end{proof}
In order to obtain a convergence rate estimate from (\ref{LS:eq:Cea}), we have to insert 
a suitable interpolation or quasi-interpolation 
$I_{h}^p(u) \in V_{0h}$ 
of the solution $u$ 
into the infimum. If $u$ is sufficiently smooth, then we can use the nodal Lagrange 
interpolation that leads to completely local estimates. 
More precisely, let $u \in H^{\mathcal{L}}_{0}(\mathcal{Q}) \cap H^{k}(Q) \cap  H^{l}(\mathcal{T}_h)$  
with some $l \ge k > (d+1)/2$. Since in this case $H^{k}(Q)\subset C(\overline{Q})$,
the nodal Lagrange interpolation is well defined.
Then the standard finite element interpolation error estimates
(see, e.g., \cite[Theorem 4.4.4]{LS:BrennerScott:2008a} or \cite[Theorem 3.1.6]{LS:Ciarlet:1978a}) and the best-approximation error estimate (\ref{LS:eq:Cea}) 
lead to the a priori discretization error estimate
\begin{equation}
	\| u-u_h \|_h \leq c\left(\sum_{K\in\mathcal{T}_h}h_K^{2(s-1)}| u |_{H^{s}(K)}^2\right)^{1/2} \label{LS:eq:apriorierrorestimatesmooth}
\end{equation}
with $s = \min \{l,p+1\}$ and some generic positive constant $c$;
see \cite[Theorem 13.3]{LS:LangerNeumuellerSchafelner:2019a}.
If the nodal Lagrange interpolation is not well defined due to the low regularity of 
the solution $u$, we can make use of the quasi-interpolators proposed 
by Cl\'{e}ment \cite{LS:Clement:1975a} or Scott and Zhang \cite{LS:ScottZhang:1990a}.
Let $ v\in H^k(Q) $, with some 
$k > 1$.
Then, for instance, the Scott-Zhang quasi-interpolation operator $ I_h^{S\!Z} : H^1(Q) \rightarrow V_{0h} $ 
provides the local interpolation error estimate 
\begin{equation}\label{LS:eq:LocalInterpolationErrorEstimateSZ}
    \| v-{I}_h^{S\!Z} v \|_{H^m(K)} \leq c h_K^{s-m} |v|_{H^s(S_K)},\;\; m=0,1,
\end{equation}
where $s=\min\{k,p+1\}$, $c$ is again a generic positive constant, and 
$ S_K := \{ K'\in\mathcal{T}_h : \overline{K}\cap\overline{K}'\neq\emptyset\}$ 
denotes the neighborhood of the simplex $K \in \mathcal{T}_h$;
see 
monograph 
\cite[(4.8.10)]{LS:BrennerScott:2008a})
or the original paper \cite{LS:ScottZhang:1990a},
and space interpolation results \cite{LS:BerghLoefstroem:1976a,LS:Heuer:2014a}.
\begin{theorem}
\label{LS:theorem.ConvergenceRateEstimate}
Beside the assumptions imposed on the data $\nu$, $f$, $\mathbf{f}$ and $Q$, and the triangulation $\mathcal{T}_h$,
we assume that the solution $u$ of the space-time variational 
problem \eqref{LS:eq:SpaceTimeVF} belongs to  $H^{\mathcal{L}}_{0}(\mathcal{Q}) \cap H^k(Q)$, $(d+1)/2 \ge k > 1$, and $\nu \nabla_x(u) \in (H^{k-1}(K))^d$ 
for all $K \in \mathcal{T}_h$.
Furthermore, let the assumptions of Lemma~\ref{LS:lemma:Cea} be fulfilled.
Then the a priori discretization error estimate
\begin{equation}
\label{LS:eq:discretization-error-estimate}
    \|u-u_h\|_h^2 \le c\,
    \sum_{K\in\mathcal{T}_h} \left[h_K^{2(s-1)} \bigl(|u|_{H^s(S_K)}^2  + |\nu \nabla_x u|_{H^{s-1}(K)}^2\bigr)   
            + h_K^2\|\mathrm{div}_x(\nu\nabla_{x}u)\|_{L_2(K)}^2\right]
\end{equation}
    holds,  with $ s=\min\{k,p+1\} = k \le (d+1)/2$ for $d=1,2,3$ and a positive generic constant $ c $.
\end{theorem}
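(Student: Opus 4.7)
The plan is to invoke the best-approximation estimate of Lemma~\ref{LS:lemma:Cea} with the Scott--Zhang quasi-interpolant $v_h = I_h^{S\!Z} u \in V_{0h}$ as the competitor; the nodal Lagrange interpolant is unavailable here because the regularity exponent $k$ is only known to exceed $1$. The strategy is then to bound $\|u-I_h^{S\!Z} u\|_h$ and $\|u-I_h^{S\!Z} u\|_{h,*}$ element by element. The local Scott--Zhang estimate \eqref{LS:eq:LocalInterpolationErrorEstimateSZ} already delivers the desired factor $h_K^{2(s-1)}|u|_{H^s(S_K)}^2$ for the $\|\nu^{1/2} \nabla_x(\cdot)\|_{L_2(K)}^2$ contribution present in both norms; combined with the scaling $\theta_K h_K \le c\, h_K^2$ (from the choice $\theta_K \le h_K/c_{I,K,\nu}^2$ made in Lemma~\ref{LS:lemma:Coercivity}) it also controls the time-derivative term in $\|\cdot\|_h$ and, via $(\theta_K h_K)^{-1} \le c\, h_K^{-2}$, the $L_2$ term in $\|\cdot\|_{h,*}$. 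The two temporal trace contributions at $t=0$ and $t=T$ are handled by a scaled trace inequality applied to $u - I_h^{S\!Z} u$ on faces of elements meeting $\Sigma_0$ and $\Sigma_T$, producing contributions of subdominant order that are absorbed into the same asymptotic factor.

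The delicate contribution is the divergence term inside $\|\cdot\|_{h,*}$. Writing $\boldsymbol{\sigma} := \nu\nabla_x u$, we decompose
\begin{align*}
\mathrm{div}\begin{pmatrix} -\nu \nabla_x (u - I_h^{S\!Z} u) \\ u - I_h^{S\!Z} u\end{pmatrix}
= -\mathrm{div}_x\bigl(\boldsymbol{\sigma} - \nu\nabla_x I_h^{S\!Z} u\bigr) + \partial_t (u - I_h^{S\!Z} u),
\end{align*}
so that the $\partial_t$ part is again covered by \eqref{LS:eq:LocalInterpolationErrorEstimateSZ}. For the spatial divergence I would introduce an $H(\mathrm{div}_x)$-conforming interpolant $\boldsymbol{\sigma}_h$ of $\boldsymbol{\sigma}$ on each $K$ (for instance the Raviart--Thomas projection of order $p-1$) enjoying the commuting property $\mathrm{div}_x \boldsymbol{\sigma}_h = \Pi_h^{p-1}\mathrm{div}_x \boldsymbol{\sigma}$ (with $\Pi_h^{p-1}$ the elementwise $L_2$-projection onto polynomials of degree $p-1$) together with $\|\boldsymbol{\sigma} - \boldsymbol{\sigma}_h\|_{L_2(K)} \le c\, h_K^{s-1} |\boldsymbol{\sigma}|_{H^{s-1}(K)}$. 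Splitting
\begin{align*}
\mathrm{div}_x(\boldsymbol{\sigma} - \nu\nabla_x I_h^{S\!Z} u) = \bigl(\mathrm{div}_x\boldsymbol{\sigma} - \Pi_h^{p-1}\mathrm{div}_x\boldsymbol{\sigma}\bigr) + \mathrm{div}_x\bigl(\boldsymbol{\sigma}_h - \nu\nabla_x I_h^{S\!Z} u\bigr),
\end{align*}
the first summand is bounded by $\|\mathrm{div}_x\boldsymbol{\sigma}\|_{L_2(K)}$ via $L_2$-stability of $\Pi_h^{p-1}$, which after the weight $\theta_K h_K \le c\, h_K^2$ produces exactly the third term of \eqref{LS:eq:discretization-error-estimate}. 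The second summand is controlled by a polynomial inverse inequality together with \eqref{LS:eq:InverseInequality} applied to $\nu\nabla_x I_h^{S\!Z} u$, giving a factor $c\, h_K^{-1} \|\boldsymbol{\sigma}_h - \nu\nabla_x I_h^{S\!Z} u\|_{L_2(K)}$; a further triangle inequality reduces the latter to $\|\boldsymbol{\sigma} - \boldsymbol{\sigma}_h\|_{L_2(K)} + \overline{\nu}\,\|\nabla_x(u - I_h^{S\!Z} u)\|_{L_2(K)}$, and the two approximation estimates just recalled, combined with $\theta_K h_K \le c\, h_K^2$, yield the first two terms of \eqref{LS:eq:discretization-error-estimate}.

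The main obstacle is precisely this treatment of the spatial divergence of $\nu\nabla_x I_h^{S\!Z} u$: since $\nu$ is not globally smooth, the inverse inequality must be used elementwise, and the mesh-alignment assumption on $\mathcal{T}_h$ with respect to the decomposition $\{Q_i\}$ is essential so that the constant $c_{I,K,\nu}$ in \eqref{LS:eq:InverseInequality} stays uniformly bounded in $h$. A minor technical point is that the condition $k>1$ is exactly what is required to make both the Scott--Zhang trace well-defined on $\Sigma$ and the $H^{s-1}$-seminorm of $\boldsymbol{\sigma}$ meaningful. Combining the $\|\cdot\|_h$ and $\|\cdot\|_{h,*}$ bounds through \eqref{LS:eq:Cea} with the constants $\mu_b,\mu_c$ from Lemmas~\ref{LS:lemma:Boundedness} and \ref{LS:lemma:Coercivity} and squaring then yields the asserted a priori estimate \eqref{LS:eq:discretization-error-estimate}.
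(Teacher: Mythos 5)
Your proposal follows the paper's own skeleton up to the decisive step: insert $v_h=I_h^{S\!Z}u$ into \eqref{LS:eq:Cea}, expand $\|\cdot\|_h$ and $\|\cdot\|_{h,*}$ elementwise, bound the standard terms with \eqref{LS:eq:LocalInterpolationErrorEstimateSZ}, and treat the divergence term separately with an inverse estimate. Two small remarks before the main point: the bound for the $(\theta_Kh_K)^{-1}\|e_h\|_{L_2(K)}^2$ term needs $(\theta_Kh_K)^{-1}\le c\,h_K^{-2}$, i.e.\ $\theta_K\gtrsim h_K$, which does not follow from $\theta_K\le h_K/c_{I,K,\nu}^2$ as you state but from the intended choice $\theta_K\sim h_K$ (a slip in the stated implication, not in substance); and your elementwise scaled trace inequality for the $t=0$ and $t=T$ terms is fine (the paper simply uses the trace theorem on $Q$ together with \eqref{LS:eq:LocalInterpolationErrorEstimateSZ}).

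The genuine gap lies in your treatment of the spatial flux. The paper inserts the piecewise \emph{constant} elementwise average $I_h^0q$ of the full space-time flux $q=(-\nu\nabla_xu,\,u)^\top$: a constant is divergence-free, so $\|\mathrm{div}(q-I_h^0q)\|_{L_2(K)}=\|\mathrm{div}\,q\|_{L_2(K)}$ produces the third term of \eqref{LS:eq:discretization-error-estimate} immediately, the $L_2$-approximation of the average is a Poincar\'e/Bramble--Hilbert bound of order $h_K^{s-1}|\nu\nabla_xu|_{H^{s-1}(K)}$, and the required inverse estimate only concerns $c_h+\nu\nabla_xv_h$ with \emph{constant} $c_h$, a mild variant of \eqref{LS:eq:InverseInequality}. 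Your substitute, an $H(\mathrm{div}_x)$-conforming commuting Raviart--Thomas-type interpolant $\boldsymbol{\sigma}_h$ of $\boldsymbol{\sigma}=\nu\nabla_xu$, leaves two steps unjustified. First, such an interpolant acting only in the spatial variables on completely unstructured $(d{+}1)$-dimensional simplices (no tensor-product structure), with the commuting property $\mathrm{div}_x\boldsymbol{\sigma}_h=\Pi_h^{p-1}\mathrm{div}_x\boldsymbol{\sigma}$ and accuracy $O(h_K^{s-1})$ for $\boldsymbol{\sigma}\in H^{s-1}(K)$ with $s-1=k-1$ possibly close to $0$, is not available off the shelf: the classical RT projection requires face moments of normal traces, hence roughly $H^{1/2+\varepsilon}$ regularity, and the known commuting quasi-interpolants are constructed for the full divergence in the ambient dimension. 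Second, the bound $\|\mathrm{div}_x(\boldsymbol{\sigma}_h-\nu\nabla_xI_h^{S\!Z}u)\|_{L_2(K)}\le c\,h_K^{-1}\|\boldsymbol{\sigma}_h-\nu\nabla_xI_h^{S\!Z}u\|_{L_2(K)}$ does not follow from \eqref{LS:eq:InverseInequality} plus a polynomial inverse inequality: applying those to the two summands separately destroys the difference structure, and since $\nu$ is merely $L_\infty$ the difference is not a polynomial, so you would have to assume a difference-type inverse inequality for RT-plus-weighted-gradient functions, strictly stronger than the constant-shift version the paper uses (which is cheap precisely because constants are divergence-free). Taking $\boldsymbol{\sigma}_h$ to be the constant elementwise average removes both difficulties at once---the commuting property becomes unnecessary because the first summand is then simply $\mathrm{div}_x\boldsymbol{\sigma}$---and that is exactly the paper's argument.
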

\begin{proof}
Inserting $v_h = I_h^{S\!Z}u \in V_{0h}$ into the infimum of the right-hand side of the best-approximation
error estimate \eqref{LS:eq:Cea}, we immediately get
\begin{align}
\label{LS:eq:dee1}
\nonumber
\|u-u_h\|_h^2   \leq& \left(\|u-I_h^{S\!Z}u\|_h + \frac{\mu_b^2}{\mu_c^2}\|u-I_h^{S\!Z}u\|_{h,*}\right)^2\\ \nonumber
                \leq& \; 2 \|e_h\|_h^2 + 2 \frac{\mu_b^2}{\mu_c^2}\|e_h\|_{h,*}^2\\ \nonumber
                \leq&   \sum_{K\in\mathcal{T}_h} \bigl[ 
                        2(1+\frac{\mu_b^2}{\mu_c^2})\| \nu^{1/2}\nabla_{x}e_h\|_{L_2(K)}^2 + 
                        2 \theta_Kh_K\|\partial_t e_h\|_{L_2(K)}^2\\\nonumber
                    &   + 2 \frac{\mu_b^2}{\mu_c^2}(\theta_Kh_K)^{-1}\|e_h\|_{L_2(K)}^2
                        + 2 \frac{\mu_b^2}{\mu_c^2}\theta_Kh_K\|\mathrm{div} \begin{pmatrix}-\nu\nabla_{x}e_h\\e_h\end{pmatrix}\|_{L_2(K)}^2
                        \bigl]\\ 
                    & + 2(1+\frac{\mu_b^2}{\mu_c^2})\|e_h(\cdot,T)\|_{L_2(\Omega)}^2 + 2 \|e_h(\cdot,0)\|_{L_2(\Omega)}^2,
\end{align}
where $e_h = u - I_h^{S\!Z}u$ denotes the interpolation error.
The first three terms in the sum can directly be estimated by means of the local interpolation error estimate 
\eqref{LS:eq:LocalInterpolationErrorEstimateSZ}. 
The fourth term needs special treatment. Inserting and subtracting the average
\begin{equation}
    I_{h}^0 q(x) = \frac{1}{|K|} \int_K q(y) dy \in \mathbb{R}^{d+1} \; \mbox{for}\, x \in K 
\end{equation}
of the space-time flux $q = (-\nu (\nabla_x u)^T,u)^T$ over $K \in\mathcal{T}_h$
and using the inverse inequalities 
$\| \mathrm{div}_x(c_h + \nu \nabla_x v_h)\|_{L_2(K)} \le c h_K^{-1} \|c_h + \nu \nabla_x v_h\|_{L_2(K)}$
for all $(c_h,v_h) \in \mathbb{R}^d \times V_{0h}$ and
$\| \partial_t (c_h - v_h)\|_{L_2(K)} \le c h_K^{-1} \|c_h -  v_h\|_{L_2(K)}$
for all $(c_h,v_h) \in \mathbb{R} \times V_{0h}$, we can estimate this term as follows: 
\begin{align*}
 \|\mathrm{div} \begin{pmatrix}-\nu\nabla_{x}e_h\\e_h\end{pmatrix}\|_{L_2(K)}
    &=\|\mathrm{div} (q - I_{h}^0 q + I_{h}^0 q - \begin{pmatrix}-\nu\nabla_{x}(I_h^{S\!Z}u)\\I_h^{S\!Z}u\end{pmatrix})\|_{L_2(K)}\\
  \le&\|\mathrm{div} q\|_{L_2(K)} + \|\mathrm{div} (I_{h}^0 q -\begin{pmatrix}-\nu\nabla_{x}(I_h^{S\!Z}u)\\I_h^{S\!Z}u\end{pmatrix})\|_{L_2(K)}\\
  \le&\|\mathrm{div} q\|_{L_2(K)} + c h_K^{-1} \| I_{h}^0 q -\begin{pmatrix}-\nu\nabla_{x}(I_h^{S\!Z}u)\\I_h^{S\!Z}u\end{pmatrix}\|_{L_2(K)}.
\end{align*}
Now, the last term can be estimated by means of the local approximation properties of $I_{h}^0$ and $I_h^{S\!Z}$.
Indeed, inserting and subtracting $q$ in the last term of the above estimate, we get
\begin{align*}
    \| I_{h}^0 q &-\begin{pmatrix}-\nu\nabla_{x}(I_h^{S\!Z}u)\\I_h^{S\!Z}u\end{pmatrix}\!\|_{L_2(K)}^2
        \le 
        2 \| I_{h}^0 q - q\|_{L_2(K)}^2 + 
    2 \| q-\begin{pmatrix}-\nu\nabla_{x}(I_h^{S\!Z}u)\\I_h^{S\!Z}u\end{pmatrix}\!\|_{L_2(K)}^2\\
    \le & \; c 
    \left( 
        h_K^{2(s-1)} |\nu \nabla_{x}(u) |_{H^{s-1}(K)}^2 + 
        h_K^2 |u|_{H^1(K)}^2 +
        h_K^{2(s-1)} |u|_{H^{s}(S_K)}^2 +
        h_K^{2s} |u|_{H^{s}(S_K)}^2 
        \right),
\end{align*}
    where we have used the Poincar\'{e} inequality for estimating the first term
    and \eqref{LS:eq:LocalInterpolationErrorEstimateSZ} for the second term.
    Finally, we have to estimate the last two terms in \eqref{LS:eq:dee1}. 
    The trace theorem and the local interpolation error estimate 
    \eqref{LS:eq:LocalInterpolationErrorEstimateSZ} yield the estimates
    \begin{align*}
            \|e_h(\cdot,T)\|_{L_2(\Omega)}^2
            \le &\, c\, \|e_h\|_{H^1(Q)}^2     \le  c \sum_{K\in\mathcal{T}_h} \|e_h\|_{H^1(K)}^2
            \le  c \sum_{K\in\mathcal{T}_h} h_K^{2(s-1)} |u|_{H^s(S_K)}^2
    \end{align*}
that is obviously also valid for the last term $\|e_h(\cdot,0)\|_{L_2(\Omega)}^2$ in \eqref{LS:eq:dee1}. 
    Combining these estimates, we arrive at \eqref{LS:eq:discretization-error-estimate}.
\end{proof}
%
\section{A posteriori error estimates and adaptivity}
\label{LS:Section:Aposteriori}

In contrast to elliptic boundary value problems, there are not so many results on space-time
a posteriori error estimates and simultaneous space-time adaptivity driven by the corresponding error indicators 
for finite element schemes on unstructured simplicial meshes that treat  time $t$ as just another 
variable, say, $x_{d+1}$, like in this paper; cf. also survey paper \cite{LS:SteinbachYang:2019a}. 
As in the elliptic case, Steinbach and Yang \cite{LS:SteinbachYang:2018a} 
have recently proposed to use the local residual error indicator
\begin{equation}
	\label{LS:eq:ResidualErrorIndicator}
	\eta_K := \left( h_K^2 \|R_h(u_h)\|_{L_2(K)}^2 + h_K \|J_h(u_h) \|_{L_2(\partial K)}^2 \right)^{1/2}
\end{equation}
for driving the adaptivity, where $ u_h $ is the space-time finite element solution,
$R_h(u_h) := f + \mathrm{div}_x(\nu \nabla_x u_h) - \partial_t u_h$ denotes the 
residual on  $K \in \mathcal{T}_h $, 
$J_h(u_h) := [\nu \nabla_x u_h]_e$ represents the jump of the space flux across one face $e \subset \partial K$ 
of the boundary $\partial K$ of an element $K \in \mathcal{T}_h $.
Here, we assume that there are no distributional sources, i.e., $\mathbf{f} = \mathbf{0}$.
While
one has complete theoretical control  
on the adaptive process (reliability, efficiency, convergence, optimality) 
in the 
elliptic case, see \cite{LS:CarstensenFeischlPagePraetorius:2014a} and the references therein, 
similar theoretical results are not available 
for adaptive processes based on the residual error indicator \eqref{LS:eq:ResidualErrorIndicator}.
Even the reliability is not shown theoretically, although the numerical results presented in 
\cite{LS:SteinbachYang:2018a,LS:SteinbachYang:2019a} show the same behavior as in the elliptic case.

Repin proposed functional a posteriori discretization error estimates for the instationary heat equation 
in \cite{LS:Repin:2002a}, see also Repin's monograph \cite{LS:Repin:2008a}[Section~9.3, pp. 229--242].
Repin's first form of the error majorant
\begin{align*}
    &\overline{\mathfrak{M}}_{1}^2(\beta,\delta,v,\mathbf{y}) := \int_\Omega |v(x,0)-u_0(x)|^2\, \mathrm{d}x\\  
        &+ \frac{1}{\delta} \int_{Q} \left[ (1+\beta) | \mathbf{f} + \mathbf{y} - \nu \nabla_{x} v|^2 + c_{F\Omega}^2 (1+\frac{1}{\beta}) |f-\partial_{t}v+\mathrm{div}_{x}\mathbf{y}|^2 \right]\mathrm{d}Q
\end{align*}
provides a guaranteed upper bound of the error $u - v$ with respect to the norm 
\begin{equation}
\label{LS:eq:FunctionalErrorEstimate1}
 |\!|\!| u - v |\!|\!|_{(1,2 - \delta)}^2 :=   (2-\delta)\|\nabla_{x}(u-v)\|_{L_2(Q)}^{2} + 
 \|u -v \|_{L_2(\Sigma_T)}^{2} 
 \leq \overline{\mathfrak{M}}_{1}^2(\beta,\delta,v,\mathbf{y})
\end{equation}
for any approximation $v \in H^1_0(Q)$ to the solution $u$ of \eqref{LS:eq:SpaceTimeVF},
for any flux $\mathbf{y} \in H(\mathrm{div}_x,Q) := \{ {\mathbf{y}} \in (L_2(Q))^d: \mathrm{div}_x(\mathbf{y}) \in L_2(Q)\}$,
and for any weight function 
$\beta \in L^\infty_\mu(0,T) := \{\beta \in L^\infty(0,T):\,\beta(t) \ge \mu \mbox{ for almost all } t \in (0,T)\}$.
Here $\delta \in (0,2]$ and $\mu \in (0,1)$ are fixed parameters, and $c_{F\Omega}$ is the Friedrichs constant
for the space domain $\Omega$. 
Estimate \eqref{LS:eq:FunctionalErrorEstimate1} is proven
in Theorem~9.6 from \cite{LS:Repin:2008a}
for the case $\mathbf{f}=\mathbf{0}$, but the proof remains unchanged in the presence 
of distributional sources.
In particular, we can choose our finite element solution $u_h \in V_{0h} \subset H^1_0(Q)$ as 
$v$ in \eqref{LS:eq:FunctionalErrorEstimate1}. The parameter $\delta \in (0,2]$ 
can be chosen to weight the two parts of the norm $|\!|\!| \, \cdot \, |\!|\!|_{(1,2 - \delta)}$,
e.g., the choice $\delta = 1$ equilibrates both parts, whereas the choice $\delta = 2$ provides 
a $L_2$ error estimate at the final time $t=T$.
In order to compute an upper bound on the discretization error $u - u_h$ 
via estimate \eqref{LS:eq:FunctionalErrorEstimate1}, we have 
to choose appropriate fluxes $\mathbf{y} \in H(\mathrm{div}_x,Q)$ and 
weight functions $\beta \in L^\infty_\mu(0,T)$.
We cannot choose $\mathbf{y} = \nu \nabla_x u_h - \mathbf{f}$
since the finite element flux $\nu \nabla_x u_h$ does not belong to $H(\mathrm{div}_x,Q)$ in general.
Thus, we have to postprocess the finite element flux in such a way that the postprocessed 
finite element flux $P_h (\nu \nabla_x u_h)$ belongs to some $Y_h \subset H(\mathrm{div}_x,Q)$.
In our numerical experiments in Section~\ref{LS:Section:NumericalResults}, we simply average 
the finite element flux in the vertices; see e.g. \cite{LS:ZienkiewiczZhu:1992a,LS:ZienkiewiczZhu:1992b}.
Then one can calculate $\beta$ from the minimization of the majorant.
If we choose $\beta$ as a positive constant, then we get 
\begin{equation}
\label{LS:eq:beta}
\beta^{(0)} = c_{F\Omega} \, \overline{\mathfrak{M}}_{eq}(u_h,\mathbf{y}_h^{(0)})\, /\, \overline{\mathfrak{M}}_{flux}(u_h,\mathbf{y}_h^{(0)})   
\end{equation}
where $\mathbf{y}_h^{(0)} = P_h (\nu \nabla_x u_h) - \mathbf{f}$, 
$\overline{\mathfrak{M}}_{eq}(u_h,\mathbf{y}_h^{(0)}) = \int_{Q}  |f-\partial_{t}u_h+\mathrm{div}_{x}\mathbf{y}_h^{(0)}|^2 \mathrm{d}Q$,
and 
$\overline{\mathfrak{M}}_{flux}(u_h,\mathbf{y}_h^{(0)}) = \int_{Q} |\mathbf{f} + \mathbf{y}_h^{(0)} - \nu \nabla_x u_h|^2 \mathrm{d}Q$.
If one is not satisfied with the bound $\overline{\mathfrak{M}}_{1}^2(\beta^{(0)},\delta,u_h,\mathbf{y}_h^{(0)})$,
then one can start an alternating minimization of the majorant 
$\overline{\mathfrak{M}}_{1}^2(\beta,\delta,u_h,\mathbf{y}_h)$ with respect to $\mathbf{y}_h$ and $\beta$,
where one has to solve  the auxiliary problem: find $\mathbf{y}_h^{(k+1)}\in Y_h$ such that
\begin{equation}\label{LS:eq:FunctionalEstimatorAuxillaryProblem}
\mathbf{A}_k(\mathbf{y}_h^{(k+1)},\mathbf{w}_h) = \mathbf{F}_k(\mathbf{w}_h)\; \forall \mathbf{w}_h \in Y_h
\end{equation}
for improving the fluxes followed by calculating improved weights $\beta^{(k+1)}$ 
via formula \eqref{LS:eq:beta}, 
where $Y_h \subset H(\mathrm{div}_x,Q)$ is a suitable finite element space for the fluxes, 
\begin{align*}
    \mathbf{A}_k(\mathbf{y}_h,\mathbf{w}_h) := & \sum_{K\in\mathcal{T}_h} \int_{K}\!\Bigl( (1+\beta^{(k)}) \mathbf{y}_h\cdot\mathbf{w}_h \\
    &\qquad\qquad+ c_{F\Omega}^{2} (1+1/\beta^{(k)}) \mathrm{div}_x(\mathbf{y}_h)\mathrm{div}_x(\mathbf{w}_h)\Bigr)\;\mathrm{d}Q,\\
    \mathbf{F}_k(\mathbf{w}_h) := & \sum_{K\in\mathcal{T}_h} \int_{K}\!\Bigl((1+\beta^{(k)}) (\nu\nabla_{x}u_h-\mathbf{f})\cdot \mathbf{w}_h \\
    &\qquad\qquad- c_{F\Omega}^{2} (1+1/\beta^{(k)}) (f-\partial_{t}u_h)\,\mathrm{div}_x(\mathbf{w}_h)\Bigr)\;\mathrm{d}Q,
\end{align*}

In the numerical experiments presented in Section~\ref{LS:Section:NumericalResults}, we choose $ \delta = 1$, and, as already mentioned above, $ v = u_h $. 
Furthermore, we
use the 
nodal
averaging procedure to post-process the finite element flux $\nu\nabla_{x}u_h$. We use the finite element space 
$Y_h = (V_h)^d \subset (H^{1}(Q))^{d} \subset H(\mathrm{div}_x,Q)$
for the fluxes.
Note that this choice might be too restrictive since it excludes fluxes that are only continuous in their normal component. 
We apply one iteration of the alternating minimization described above, i.e., we have to solve the auxiliary problem \eqref{LS:eq:FunctionalEstimatorAuxillaryProblem} once, which is symmetric and positive definite. Hence, we can apply a few iterations of the (preconditioned)  Conjugate Gradient (CG) method, where the post-processed flux 
$\mathbf{y}^{(0)}_h$ is used as initial guess. We then use the first part of the majorant 
\[
    \eta_K(u_h) = \|\mathbf{f} + \mathbf{y}^{(1)}_h - \nu\nabla_{x}u_h\|_{L_2(K)},
\]
as an error indicator,
where $\mathbf{y}^{(1)}_h$ results from the improvement of $\mathbf{y}^{(0)}_h$ 
by a few CG iterations.
We proceed by using D\"{o}rfler marking \cite{LS:Doerfler:1996a} as a marking strategy, i.e., we determine a set $\mathcal{M}\subseteq\mathcal{T}_h$ of (almost) minimal cardinality such that 
\[ \Xi\ \eta(u_h)^2 \leq \sum_{K\in\mathcal{M}} \eta_K(u_h)^2, \]
with a suitable bulk parameter $\Xi \in (0,1]$. In particular, we use the implementation recently proposed by Pfeiler et al. \cite{LS:PfeilerPraetorius:2019a}, which is straightforward to parallelize. 
We will also survey the efficiency index of the a posteriori error estimator, that is defined as
\[
    \mathrm{I_{eff}}^2 = \frac{\sum_{K\in\mathcal{T}_h}\eta_K(u_h)^2}{\|u-u_h\|_h^2},
\]
with the exact solution $u$ and its finite element approximation $u_h$.

\section{Numerical results}
\label{LS:Section:NumericalResults}

In this section, we present the results of an extensive set of numerical experiments. 
We realized 
the space-time FEM in our \texttt{C++} code ``SpTmFEM'', using the 
fully parallelized 
finite element library MFEM \cite{LS:mfem-library}, 
where we implemented 
arbitrary-order finite elements in 4D. The large linear systems arising from the discretized variational problem are solved by means of a flexible Generalized Minimal Residual (GMRES) method \cite{LS:Saad:1993a}, preconditioned by an algebraic multigrid V-cycle. In particular, we use the 
\textit{BoomerAMG}, provided by the solver library hypre\footnote{https://computing.llnl.gov/projects/hypre-scalable-linear-solvers-multigrid-methods}. We start the iterative solver with the initial guess zero, and stop once the initial residual is reduced by a factor of \num{e-8}. However, especially in an adaptive procedure, it may be better to interpolate the solution from the previous refinement level to the current one and use it as an initial guess for the iterative solver. In addition, we relax the stopping criterion, i.e., we stop once the initial residual is reduced by a factor of e.g. \num{e-2}. This approach is sometimes referred to as \emph{Nested Iterations (NI)}. 

All numerical experiments were performed on the distributed memory cluster {\texttt{Quartz}\footnote{https://hpc.llnl.gov/hardware/platforms/Quartz}}, located at the Lawrence Livermore National Laboratory. 

%
%
\subsection{Moving peak}
\label{LS:Subsection:MovingPeak}
%
For our first example, we consider the four dimensional hyper-cube $Q=(0,1)^4$ as space-time cylinder, 
the
diffusion coefficient $\nu \equiv 1$, and we use the manu\-factured solution
\[
    u(x,t) = \prod_{i=1}^{3}(x_{i}^2-x_{i})(t^2-t)e^{-100\left((x_1-t)^2+(x_2-t)^2+(x_3-t)^2\right)}.
\]
We then
compute the right-hand side and boundary data accordingly. 
This solution is very smooth. 
Thus, we would expect optimal convergence rates for uniform mesh refinement. 
However, the solution is also very localized and has steep gradients. 
In order to recover the optimal convergence rate  quickly,
we will apply adaptive mesh refinement based on the error estimator from the previous section. 
Moreover, we compare the efficiency index $I_\mathrm{eff}$ of the functional error estimator with that of the residual-based error indicator proposed by Steinbach and Yang \cite{LS:SteinbachYang:2019a}.
In Figure~\ref{LS:fig:moving-peak:rates}, we present the relative error in the energy norm \eqref{LS:eq:Norm_h}, using both uniform and adaptive refinement, as well as different polynomial degrees for the finite element 
shape functions. 
We can observe the expected behavior, i.e., even a sufficiently uniformly refined mesh results in an (almost) optimal convergence rate wrt the polynomial degree. However, adaptive refinement results in optimal convergence rates, and, moreover, reduces the number of dofs needed to obtain an error below a certain threshold, e.g., for $p=3$, a relative error below 1\% needs only \num{4742845} adaptive 
space-time dofs, 
compared to 
more than \num{81044161} 
dofs using uniform refinement.
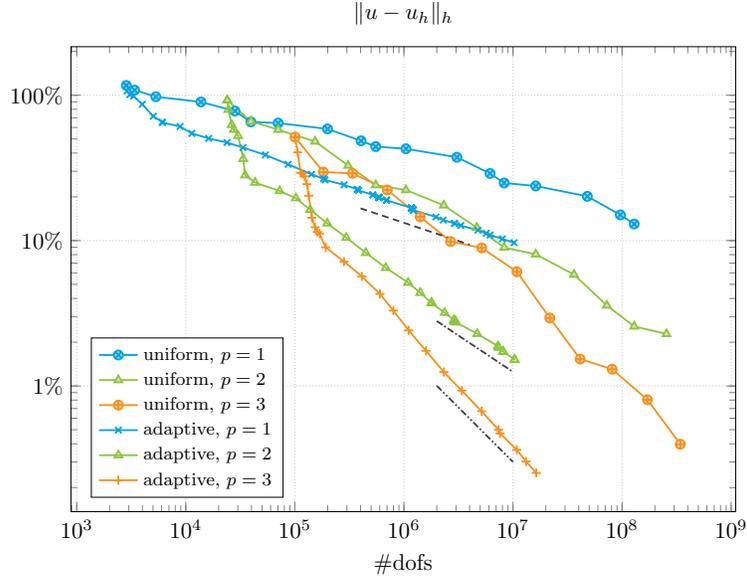
\begin{figure}[!htb]
    \centering%
    \begin{subfigure}{.85\linewidth}%
        \resizebox*{\linewidth}{!}{%
        \begin{tikzpicture}
            \begin{loglogaxis}[%
            width=\linewidth,scale only axis,%
            legend pos=south west,
            legend style={font=\footnotesize},%
            title=$ \|u-u_h\|_h $, xlabel={\#dofs},%
            cycle list name=adaptivity,%
            log y ticks with percent,%
            ]
            \addplot table [x=dofs, y=h-err, col sep=comma] {anc/moving-peak_4d_errors_o1_uniform.csv};
            \addplot table [x=dofs, y=h-err, col sep=comma] {anc/moving-peak_4d_errors_o2_uniform.csv};
            \addplot table [x=dofs, y=h-err, col sep=comma] {anc/moving-peak_4d_errors_o3_uniform.csv};
            \addplot table [x=dofs, y=h-err, col sep=comma] {anc/moving-peak_4d_errors_o1_ni_amr+functional+doerfler-0.25.csv};
            \addplot table [x=dofs, y=h-err, col sep=comma] {anc/moving-peak_4d_errors_o2_ni_amr+functional+doerfler-0.25.csv};
            \addplot table [x=dofs, y=h-err, col sep=comma] {anc/moving-peak_4d_errors_o3_ni_amr+functional+doerfler-0.25.csv};
            \addplot[darkgray,domain=4e5:4e6, densely dashed] {(x/4.7e6)^(-1./4)*0.9e-1};
            \addplot[darkgray,domain=2e6:1e7, densely dashdotted] {(x/1e7)^(-2./4)*1.25e-2};
            \addplot[darkgray,domain=2e6:1e7, densely dashdotdotted] {(x/1e7)^(-3./4)*3.e-3};
            \legend{{uniform, $ p=1 $},%
                    {uniform, $ p=2 $},%
                    {uniform, $ p=3 $},%
                    {adaptive, $ p=1 $},%
                    {adaptive, $ p=2 $},%
                    {adaptive, $ p=3 $},%
                }
            \end{loglogaxis}
        \end{tikzpicture}
        }
    \end{subfigure}%
    \caption{Example~\ref{LS:Subsection:MovingPeak}: Convergence rates of uniform and adaptive refinements, using the functional estimator, for $d=3$ with marking threshold $\Xi = 0.25$.}
    \label{LS:fig:moving-peak:rates}
\end{figure}
In the left of Figure~\ref{LS:fig:moving-peak:efficiencyAndPlot}, we plot the efficiency indices of the functional estimator and the residual indicator for different polynomial degrees. For the functional estimator, we observe efficiency indices $I_{\mathrm{eff}} \sim 1.4$, independently of the polynomial degree $p$.
The efficiency index of the residual indicator depends on $p$. For $p=1$, we observe efficiency indices around $1$ whereas the efficiency indices are much worse for higher polynomial
degrees $p$.
\begin{figure}[!htb]
    \centering%
    \begin{subfigure}{.575\linewidth}%
        \resizebox*{\linewidth}{!}{%
        \begin{tikzpicture}
            \begin{loglogaxis}[%
                width=\linewidth,scale only axis,%
                legend pos=north east,%
                legend columns=5,%
                transpose legend,%
                title=$ \mathrm{I_{eff}} $, xlabel={\#dofs},
                log y ticks with fixed point,
                cycle list name=kanto,
            ]
            \addplot+[] table[x=dofs, y=Ieff, col sep=comma] {anc/moving-peak_4d_errors_o1_ni_amr+residual+doerfler-0.25.csv};
            \addplot+[] table[x=dofs, y=Ieff, col sep=comma] {anc/moving-peak_4d_errors_o2_ni_amr+residual+doerfler-0.25.csv};
            \addplot+[] table[x=dofs, y=Ieff, col sep=comma] {anc/moving-peak_4d_errors_o3_ni_amr+residual+doerfler-0.25.csv};
            \addplot+[] table[x=dofs, y=Ieff, col sep=comma] {anc/moving-peak_4d_errors_o4_ni_amr+residual+doerfler-0.25.csv};
            \addplot+[] table[x=dofs, y=Ieff, col sep=comma] {anc/moving-peak_4d_errors_o5_ni_amr+residual+doerfler-0.25.csv};
            \addplot+[MyBlue, mark=pentagon]    table[x=dofs, y=Ieff, col sep=comma] {anc/moving-peak_4d_errors_o1_ni_amr+functional+doerfler-0.25.csv};
            \addplot+[Viridian, mark=o]         table[x=dofs, y=Ieff, col sep=comma] {anc/moving-peak_4d_errors_o2_ni_amr+functional+doerfler-0.25.csv};
            \addplot+[Pewter]                   table[x=dofs, y=Ieff, col sep=comma] {anc/moving-peak_4d_errors_o3_ni_amr+functional+doerfler-0.25.csv};
            \addplot+[Cerulean, mark=x]         table[x=dofs, y=Ieff, col sep=comma] {anc/moving-peak_4d_errors_o4_ni_amr+functional+doerfler-0.25.csv};
            \addplot+[Vermilion]                table[x=dofs, y=Ieff, col sep=comma] {anc/moving-peak_4d_errors_o5_ni_amr+functional+doerfler-0.25.csv};
            \legend{{residual, $ p=1 $},
                    {residual, $ p=2 $},%
                    {residual, $ p=3 $},%
                    {residual, $ p=4 $},%
                    {residual, $ p=5 $},%
                    {functional, $ p=1 $},%
                    {functional, $ p=2 $},%
                    {functional, $ p=3 $},%
                    {functional, $ p=4 $},%
                    {functional, $ p=5 $},%
                    };
            \end{loglogaxis}
        \end{tikzpicture}
        }
    \end{subfigure}%
    \hfill%
    \begin{subfigure}{.425\linewidth}
        \includegraphics[width=\linewidth]{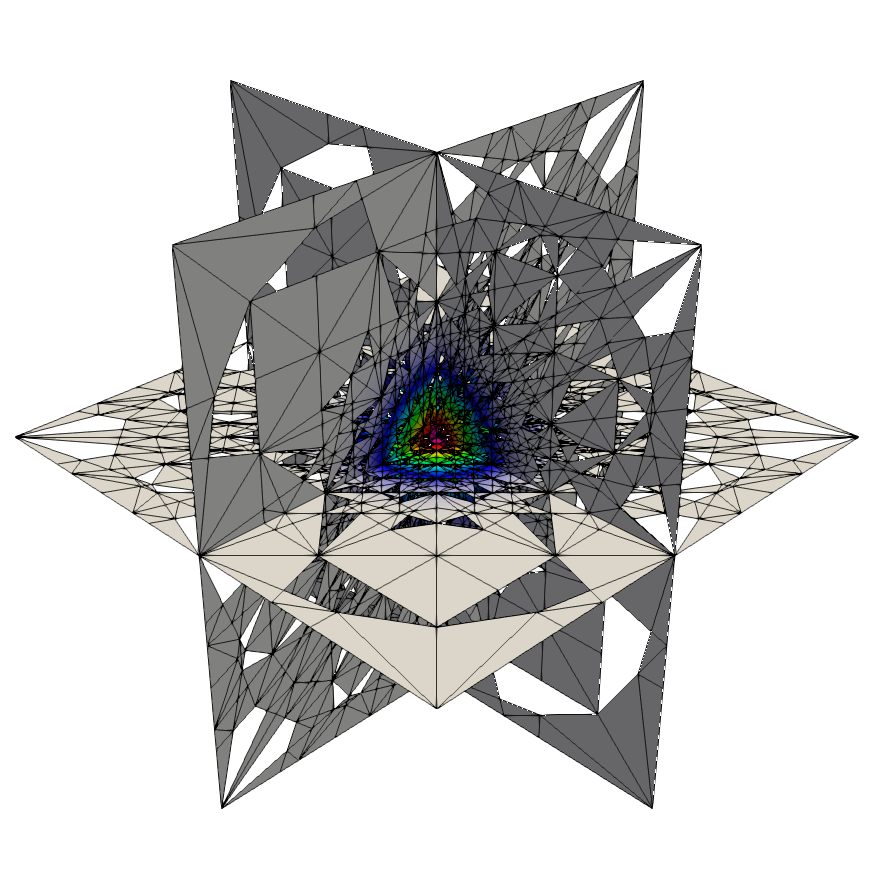}
    \end{subfigure}
    \caption{Example~\ref{LS:Subsection:MovingPeak}: Efficiency indices (lower left); and the mesh after 20 adaptive refinements, cut at $t=0.5$ (lower right); for $d=3$, and $\Xi = 0.25$.}%
    \label{LS:fig:moving-peak:efficiencyAndPlot}%
\end{figure}

%
%
\subsection{Circular Arc Scan Track}
\label{LS:Subsection:CircularArcScanTrack}
The next example was introduced in \cite{LS:CarraturoGiannelliRealiVazquez:2019a}
as a simplified model for simulating instationary heat transfer in additive manufacturing processes. 
We again have a very localized right-hand side, but this time we do not know the exact solution. 
Hence, we will compare the finite element solutions produced by adaptive refinement 
with a finite element solution obtained on a very fine, uniformly refined mesh. 
We consider the space-time cylinder $Q = (0,10)^{2}\times(0,5) $, 
the thermal conductivity coefficient 
$\nu \equiv 1$, 
the right hand side
\[
    f(x,t) = \num{2.97e5}\exp\!\left(-100\left((x_1-\bar{x}_1(t))^2 + (x_2-\bar{x}_2(t))^2\right)\right),
\]
with $(\bar{x}_1(t), \bar{x}_2(t)) = (5(1+\cos(\pi(5+2t)/{20})), 3 + 5\sin(\pi(5+2t)/{20}))$, 
the initial condition $u_0 \equiv 20$ on $\Sigma_0$,
and the Neumann boundary condition $ \nu\nabla_{x}u\cdot\vec{n}_x = 0$ on $\Sigma$;
see also \cite{LS:CarraturoGiannelliRealiVazquez:2019a}.
In the left plot of Figure~\ref{LS:fig:circular-arc:plots}, we present the convergence rates in the energy-norm $\|.\|_h$, for different combinations of polynomial degree $p$ and a posteriori error indicators/estimators. The stagnation at the end of each line is 
due to
the local mesh resolution of the adaptively refined mesh 
that
is getting smaller than the mesh resolution of the reference solution. In terms of convergence rates, the functional error estimator gives clearly better rates for linear elements, whereas the rates for quadratic and cubic elements are almost indistinguishable. However, when we compare the efficiency index $ \mathrm{I_{eff}} $, we observe a notable difference. While $ \mathrm{I_{eff}} \sim 1$ for the functional estimator independently of $p$, the efficiency index for the residual indicator is also close to $1$ for linear elements, but much worse 
when
we increase the polynomial degree.%
\begin{figure}[!htb]
    \centering%
    \begin{center}
        \ref{LS:leg:circular-arc}
    \end{center}%
    \begin{subfigure}{.5\linewidth}
        \resizebox*{\linewidth}{!}%
        {%
        \begin{tikzpicture}
            \begin{loglogaxis}[%
                    width=\linewidth,scale only axis,%
                    xmajorgrids,ymajorgrids,%
                    xlabel={\#dofs},%
                    cycle list name=adaptivity,
                    log y ticks with percent,
                    legend pos=south west,%
                    ticklabel style={font=\small},%
                    legend to name=LS:leg:circular-arc,%
                    legend columns=3,
                    log basis x=10,
                ]
                \addplot table [x=dofs, y=h-err, col sep=comma] {anc/circular_arc_scan_3d_errors_o1_ref-5_amr+functional+doerfler-0.25.csv};
                \addplot table [x=dofs, y=h-err, col sep=comma, restrict expr to domain={rawx}{1e3:1.5e5}] {anc/circular_arc_scan_3d_errors_o2_ref-4_amr+functional+doerfler-0.25.csv};
                \addplot table [x=dofs, y=h-err, col sep=comma, restrict expr to domain={rawx}{1e3:1.5e5}] {anc/circular_arc_scan_3d_errors_o3_ref-3_amr+functional+doerfler-0.25.csv};
                \addplot table [x=dofs, y=h-err, col sep=comma] {anc/circular_arc_scan_3d_errors_o1_ref-5_amr+residual+doerfler-0.25.csv};
                \addplot table [x=dofs, y=h-err, col sep=comma, restrict expr to domain={rawx}{1e3:1.5e5}] {anc/circular_arc_scan_3d_errors_o2_ref-4_amr+residual+doerfler-0.25.csv};
                \addplot table [x=dofs, y=h-err, col sep=comma, restrict expr to domain={rawx}{1e3:1.5e5}] {anc/circular_arc_scan_3d_errors_o3_ref-3_amr+residual+doerfler-0.25.csv};
                \addplot [darkgray,domain=2e4:8e4,densely dashed] {((x/1.1e5)^-(1./3))*.125};
                \addplot [darkgray,domain=2e4:5e4,densely dashdotted] {((x/40000)^-(2./3))*.08};
                \addplot [darkgray,domain=5e4:9e4,densely dashdotdotted] {((x/90000)^-1)*.035};
                \legend{
                    {functional, $ p=1 $},%
                    {functional, $ p=2 $},%
                    {functional, $ p=3 $},%
                    {residual, $ p=1 $},%
                    {residual, $ p=2 $},%
                    {residual, $ p=3 $},%
                }
            \end{loglogaxis}
        \end{tikzpicture}}
    \end{subfigure}%
    \hfill%
    \begin{subfigure}{.5\linewidth}
        \resizebox*{.925\linewidth}{!}%
        {%
        \begin{tikzpicture}
            \begin{loglogaxis}[%
                width=\linewidth,scale only axis,%
                xmajorgrids,ymajorgrids,%
                xlabel={\#dofs},%
                cycle list name=adaptivity,%
                log y ticks with fixed point,%
                legend columns=3,%
                transpose legend,%
                ticklabel style={font=\small},%
                ]
                \addplot table [x=dofs, y=Ieff, col sep=comma] {anc/circular_arc_scan_3d_errors_o1_ref-5_amr+functional+doerfler-0.25.csv};
                \addplot table [x=dofs, y=Ieff, col sep=comma, restrict expr to domain={rawx}{1e3:1.5e5}] {anc/circular_arc_scan_3d_errors_o2_ref-4_amr+functional+doerfler-0.25.csv};
                \addplot table [x=dofs, y=Ieff, col sep=comma, restrict expr to domain={rawx}{1e3:1.5e5}] {anc/circular_arc_scan_3d_errors_o3_ref-3_amr+functional+doerfler-0.25.csv};
                \addplot table [x=dofs, y=Ieff, col sep=comma] {anc/circular_arc_scan_3d_errors_o1_ref-5_amr+residual+doerfler-0.25.csv};
                \addplot table [x=dofs, y=Ieff, col sep=comma, restrict expr to domain={rawx}{1e3:1.5e5}] {anc/circular_arc_scan_3d_errors_o2_ref-4_amr+residual+doerfler-0.25.csv};
                \addplot table [x=dofs, y=Ieff, col sep=comma, restrict expr to domain={rawx}{1e3:1.5e5}] {anc/circular_arc_scan_3d_errors_o3_ref-3_amr+residual+doerfler-0.25.csv};
            \end{loglogaxis}
        \end{tikzpicture}}%
    \end{subfigure}%
    \caption{Example~\ref{LS:Subsection:CircularArcScanTrack}: Convergence rates in the energy norm (left); Efficiency indices $\mathrm{I_{eff}}$ (right); for different polynomial degrees $p$, and parameter $\Xi = 0.25$.}
    \label{LS:fig:circular-arc:plots}
\end{figure}
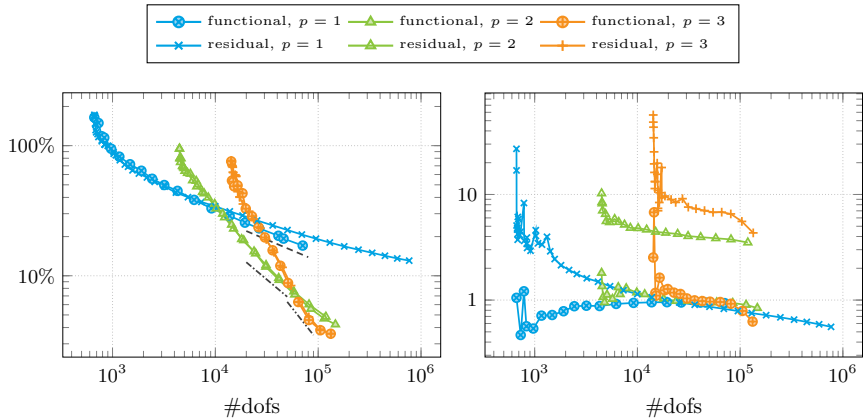
In Figure~\ref{LS:fig:circular-arc:meshes}, we present cuts through the space-time mesh, which was obtained after 10 adaptive refinements, using the functional estimator with bulk parameter $\Xi = 0.25 $. We can clearly observer that the mesh refinement follows the source. Moreover, the middle mesh seems to show triangles with very sharp angles, 
but which is due to the cut through the completely unstructured space-time mesh.%
\begin{figure}[!htb]
    \centering%
    \includegraphics[width=.32\linewidth]{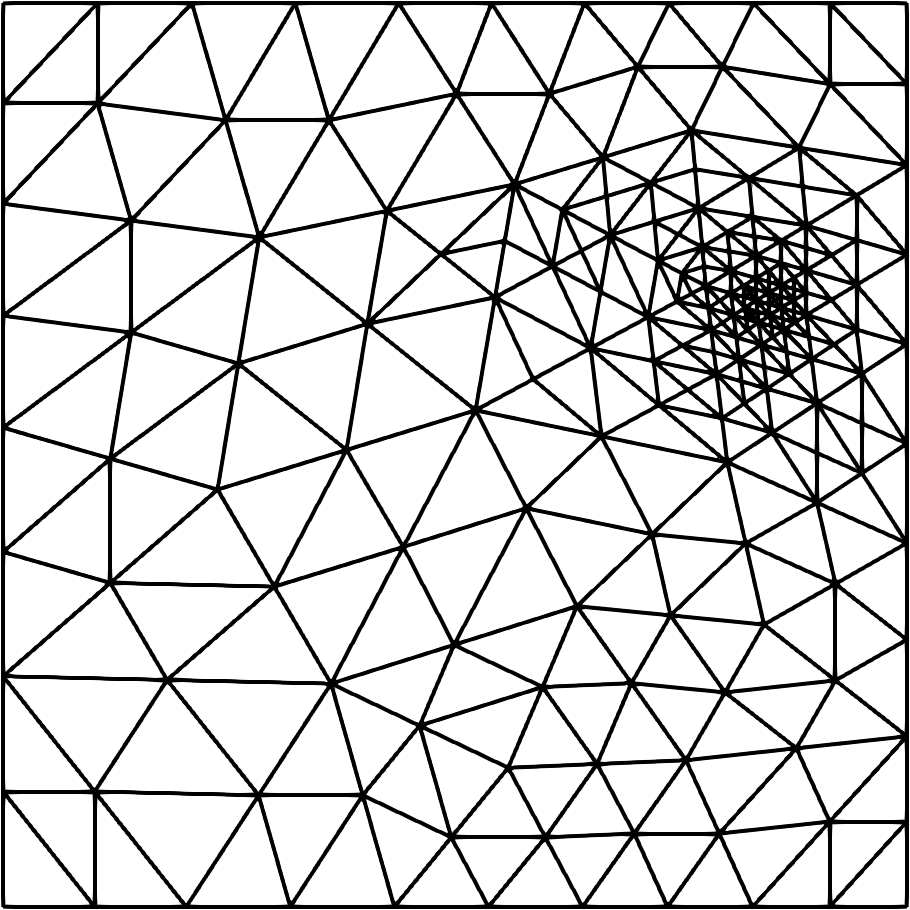}%
    \hfill%
    \includegraphics[width=.32\linewidth]{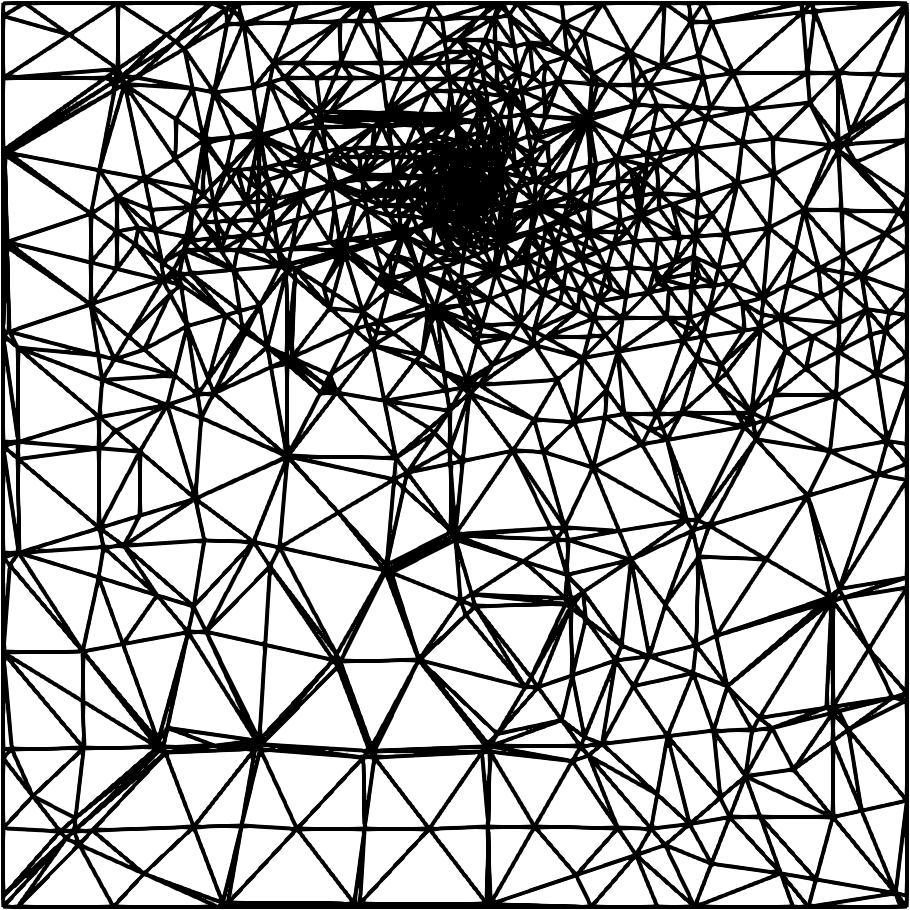}%
    \hfill%
    \includegraphics[width=.32\linewidth]{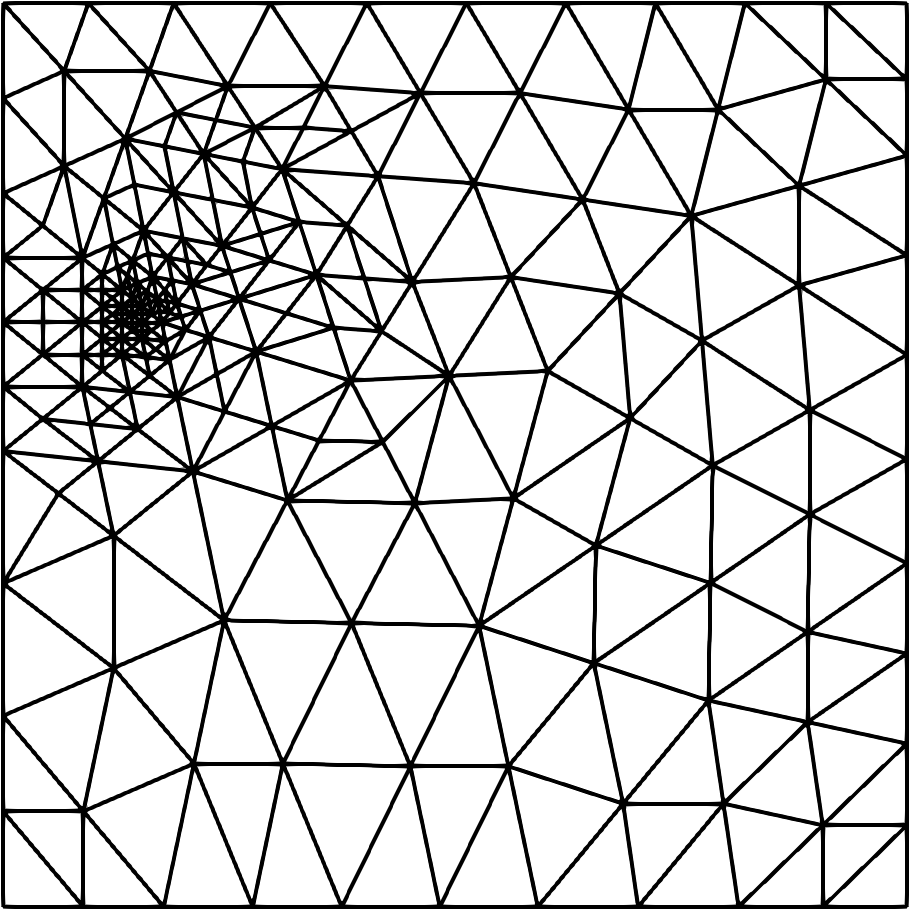}%
    \caption{Example~\ref{LS:Subsection:CircularArcScanTrack}: Cuts through the space-time mesh, after 10 adaptive refinements, from left to right, at times $t = 0$, $t=2.5$, and $t=5$.}
    \label{LS:fig:circular-arc:meshes}
\end{figure}
%

%
%
\subsection{Non-autonomous Kellogg's  interface problem}
\label{LS:Subsection:NonautonomousKellogg}
We choose the space-time cylinder $ Q = (-1,1)^2\times(0,1) $, and the discontinous
diffusion coefficient 
\[
    \nu(x,t) = \begin{cases}
        \nu_{13}(t),& (x,t)\in Q_1\cup Q_3,\\
        \nu_{24}(t), &(x,t)\in Q_2\cup Q_4,
    \end{cases}
\]
where $\nu_{13}(t) = R t + (1-t)$ and $\nu_{24}(t) = (1-t)/R + t$,  with $R = 161.4476387975884 $, and $\overline{Q} = \bigcup_{i=1}^{4}\overline{Q_i}$, $ Q_i = \Omega_i \times (0,1) $, $i = 1,\dots,4$, and $ \Omega_1 = (0,1)^2 $, $\Omega_2=(-1,0)\times(0,1)$, $\Omega_3=(-1,0)^2$ and $\Omega_4=(0,1)\times(-1,0)$. We use the manufactured solution
\[
    u(x,t) = r(x)^{\gamma}\ \mu(\varphi(x))\ t,
\]
with
\[
    \mu(\varphi) := \begin{cases}
        \cos((\frac{\pi}{2}-\sigma)\gamma) \cos((\varphi-\frac{\pi}{2} + \rho)\gamma), &0 \leq \varphi \leq \frac{\pi}{2},\\
        \cos(\gamma\rho)\cos((\varphi-\pi+\sigma)\gamma), & \frac{\pi}{2} \leq \varphi \leq \pi,\\
        \cos(\gamma\sigma)\cos((\varphi-\pi-\rho)\gamma), & \pi \leq \varphi \leq \frac{3\pi}{2},\\
        \cos(((\frac{\pi}{2})-\rho)\gamma)\cos((\varphi-3\frac{\pi}{2}-\sigma)\gamma), &\text{else},
    \end{cases}
\]
where $\gamma = 0.1$, $\rho = \pi/4  $, $\sigma = -19\pi/4 $, and $ (r,\varphi) $ represents the polar coordinates of the spatial variable $x$. The parameters $R$, $\rho$ and $\sigma$ are related via a nonlinear system of equations, which can be found in \cite{LS:Kellogg:1974a}, and determine the regularity of the function $u$. The above choice results in $ u \in H^{1+\gamma}(Q_i)$, i.e., $ u \in H^{1.1}(Q_i)$. According to the discretization error estimate \eqref{LS:eq:LocalInterpolationErrorEstimateSZ}, we cannot expect better convergence rates than $ \|u-u_h\|_h \sim \mathcal{O}(h^{\gamma}) $ for uniform mesh refinement, regardless of the chosen polynomial degree $p$. This behavior can be observed in Figure~\ref{LS:fig:kellogg-nonautonomous:convergence}, where the energy errors for uniform refinement have the same rate for $p=1,\dots,3$, i.e. $\sim \mathcal{O}(h^{0.1})$, up to a constant factor. When we compare the error rates for the adaptive refinements, we observe an improved rate of approximately $ \mathcal{O}(N_h^{-0.3/3}) $, which is still not (quasi-)optimal. One reason for this behavior might be the anisotropic nature of the singularity, which is located at the origin for all times $t$. Some preliminary tests 
with
hexahedral elements and anisotropic refinement indeed recovered the optimal rates, 
and the refinements were mostly concentrated in the spatial directions.
In the left part of Figure~\ref{LS:fig:kellogg-nonautonomous:efficiencyAndPlot}, we present the efficiency indices of the residual indicator and the functional estimator for different polynomial degrees $p$. In order to keep the plot readable, we cut off all values below 0.01 for the linear case of the residual incidator. For this example, the residual indicator yields efficiency indices $\mathrm{I_{eff}}\sim 1.5$ for $p=2$ and $\mathrm{I_{eff}}\sim 3$ for $p=3$, whereas the functional estimator gives efficiency indices $\mathrm{I_{eff}} \sim 10 - 20$, for all tested $p$. This is also resembled in the convergence rates, where we obtain slightly better results, in both value and rate, for the residual indicator with $p\ge2$; see Figure~\ref{LS:fig:kellogg-nonautonomous:convergence}.
\begin{figure}[!htb]
    \centering
    \begin{subfigure}{.85\linewidth}
    \resizebox*{\linewidth}{!}{
    \begin{tikzpicture}
        \begin{loglogaxis}[
                width=\linewidth,scale only axis,%
                xlabel={\#dofs},%
                title={$ \|u-u_h\|_h $},%
                log y ticks with fixed point,%
                cycle list name=adaptivity,%
                legend pos=north east,%
                legend columns=3, 
            ]
            \addplot table[x=dofs, y expr={\thisrow{h-err}*\thisrow{h-norm}}, col sep=comma] {anc/kellogg-0.1,non-autonomous_3d_errors_o1_uniform.csv};
            \addplot table[x=dofs, y expr={\thisrow{h-err}*\thisrow{h-norm}}, col sep=comma] {anc/kellogg-0.1,non-autonomous_3d_errors_o2_uniform.csv};
            \addplot table[x=dofs, y expr={\thisrow{h-err}*\thisrow{h-norm}}, col sep=comma] {anc/kellogg-0.1,non-autonomous_3d_errors_o3_uniform.csv};
            \addplot table[x=dofs, y=h-err, col sep=comma] {anc/kellogg-0.1,non-autonomous_3d_errors_o1_ni_amr+residual+doerfler-0.25.csv};
            \addplot table[x=dofs, y=h-err, col sep=comma] {anc/kellogg-0.1,non-autonomous_3d_errors_o2_ni_amr+residual+doerfler-0.25.csv};
            \addplot table[x=dofs, y=h-err, col sep=comma] {anc/kellogg-0.1,non-autonomous_3d_errors_o3_ni_amr+residual+doerfler-0.25.csv};
            \addplot+[Cerulean]    table[x=dofs, y=h-err-total, col sep=comma] {anc/kellogg-0.1,non-autonomous_3d_errors_o1_ni_amr+functional+doerfler-0.25.csv};
            \addplot+[LimeGreen]   table[x=dofs, y=h-err-total, col sep=comma] {anc/kellogg-0.1,non-autonomous_3d_errors_o2_ni_amr+functional+doerfler-0.25.csv};
            \addplot+[BurntOrange] table[x=dofs, y=h-err-total, col sep=comma] {anc/kellogg-0.1,non-autonomous_3d_errors_o3_ni_amr+functional+doerfler-0.25.csv};
            \addplot[domain=4276320.1:42763201,densely dashed, gray] {(x/42763201)^(-.1/3)*0.158};
            \label{LS:pgfplots:kellogg-nonautonomous:0.1}
            \addplot[domain=108907.6:1089076,densely dashdotted, gray] {(x/1089076)^(-.3/3)*0.09};
            \label{LS:pgfplots:kellogg-nonautonomous:0.3}
            \legend{
                {$p=1$, uniform},
                {$p=2$, uniform},
                {$p=3$, uniform},
                {$p=1$, residual},
                {$p=2$, residual},
                {$p=3$, residual},
                {$p=1$, functional},
                {$p=2$, functional},
                {$p=3$, functional},
                }
            \end{loglogaxis}
        \end{tikzpicture}}
    \end{subfigure}
    \caption{Example~\ref{LS:Subsection:NonautonomousKellogg}: Convergence rates for uniform and adaptive refinement, with bulk parameter $\Xi = 0.25$. The \ref{LS:pgfplots:kellogg-nonautonomous:0.1} line corresponds to a rate of $\mathcal{O}(N_h^{-0.1/3})$, and the \ref{LS:pgfplots:kellogg-nonautonomous:0.3} line to $\mathcal{O}(N_h^{-0.3/3})$.}
    \label{LS:fig:kellogg-nonautonomous:convergence}
\end{figure}
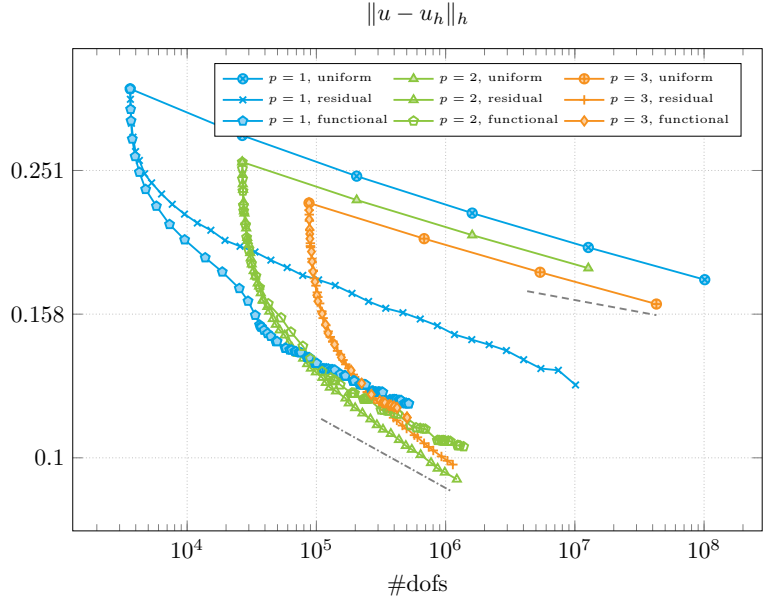
\begin{figure}[!htb]
    \centering
    \begin{subfigure}{.575\linewidth}
        \resizebox*{\linewidth}{!}{
        \begin{tikzpicture}
            \begin{loglogaxis}[%
                width=\linewidth,scale only axis,%
                legend pos=south east,
                legend style={font=\scriptsize},%
                title=$ \mathrm{I_{eff}} $, 
                xlabel={\#dofs},%
                log y ticks with fixed point,
                cycle list name=kanto,
            ]
            \addplot table[x=dofs, y=Ieff, col sep=comma, restrict expr to domain={rawy}{1e-2:100}] 
                                                          {anc/kellogg-0.1,non-autonomous_3d_errors_o1_ni_amr+residual+doerfler-0.25.csv};
            \addplot table[x=dofs, y=Ieff, col sep=comma] {anc/kellogg-0.1,non-autonomous_3d_errors_o2_ni_amr+residual+doerfler-0.25.csv};
            \addplot table[x=dofs, y=Ieff, col sep=comma] {anc/kellogg-0.1,non-autonomous_3d_errors_o3_ni_amr+residual+doerfler-0.25.csv};
            \addplot+[MyBlue]
                     table[x=dofs, y=Ieff, col sep=comma] {anc/kellogg-0.1,non-autonomous_3d_errors_o1_ni_amr+functional+doerfler-0.25.csv};
            \addplot+[Viridian]
                     table[x=dofs, y=Ieff, col sep=comma] {anc/kellogg-0.1,non-autonomous_3d_errors_o2_ni_amr+functional+doerfler-0.25.csv};
            \addplot+[Pewter]
                     table[x=dofs, y=Ieff, col sep=comma] {anc/kellogg-0.1,non-autonomous_3d_errors_o3_ni_amr+functional+doerfler-0.25.csv};
            \legend{%
                    {residual, $ p=1 $},
                    {residual, $ p=2 $},%
                    {residual, $ p=3 $},%
                    {functional, $ p=1 $},%
                    {functional, $ p=2 $},%
                    {functional, $ p=3 $},%
                    };
            \end{loglogaxis}
        \end{tikzpicture}}
    \end{subfigure}%
    \hfill%
    \begin{subfigure}{.425\linewidth}
        \centering
        \includegraphics[width=.9\linewidth]{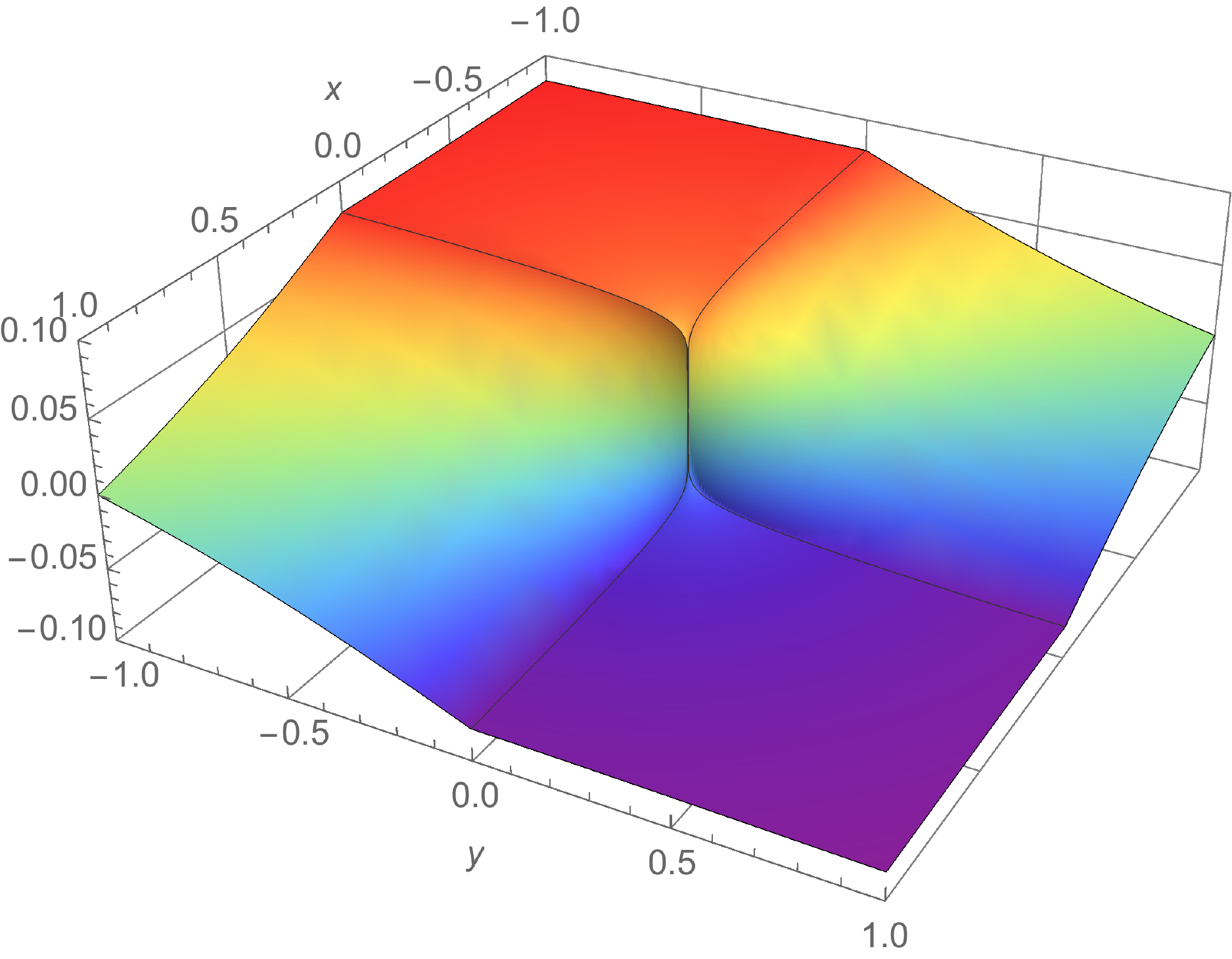}
        \vspace{2em}%
    \end{subfigure}
    \caption{Example~\ref{LS:Subsection:NonautonomousKellogg}: Efficiency indices (left); and plot of the solution at $t=1$ (right); with marking threshold $\Xi = 0.25$.}%
    \label{LS:fig:kellogg-nonautonomous:efficiencyAndPlot}
\end{figure}

%
%
\section{Conclusions}
\label{LS:Section:Conclusions}
We presented and analyzed locally stabilized, consistent, conforming finite element schemes 
on completely unstructured simplicial space-time meshes for non-autonomous parabolic initial-boundary value problems.
We admitted special distributional right-hand sides and (diffusion) coefficients 
which can be discontinuous in space and time. 
Such data can lead to low-regularity solutions.
We derived a priori estimates for solutions belonging to $H^k(Q)$ with some $k \in  (1,2]$.
In this case, uniform mesh refinement leads to low convergence rates. 
More precisely, in the energy norm $\| \cdot \|_h$, we get $O(h^{k-1})$ independently of 
the polynomial degree $p$ of the shape functions used on the reference element.
This theoretical result is confirmed by all numerical experiments performed.
The convergence rate can drastically be improved by adaptivity.
In order to devise an adaptive finite element scheme, one needs an local error indicator
derived from a posteriori discretization error estimators. 
In contrast to elliptic boundary-value problems where adaptive finite element
schemes are well established in theory and practice, the picture is different 
with respect to fully adaptive space-time finite element methods.
We numerically studied the residual indicator proposed by O. Steinbach and H. Yang,
and indicators based on functional a posteriori estimators proposed by S. Repin, 
where we only used the first part of the majorant as 
local error indicator.
This choice already yields 
an indicator
that 
is reliable and provides an upper bound with 
efficiency indicies that are close to 1 
for the first two examples, whereas the residual indicator is only reliable for higher polynomial degrees, with efficiency indices much bigger than 1. 
However, for the third example, the residual indicator results in much better efficiency indices than the functional estimator/indicator. 
The reduced performance of the latter is most likely due to two reasons: 
first, we only use 
the continuous finite element spaces $(V_h)^d$ for flux recovering that do not reflect the right behaviour 
of the fluxes across material interfaces that arise in the third example,
and second, the singularity is highly anisotropic in space and time, i.e., we would need a very high spatial resolution, but relatively coarse one in time. 
Anisotropic refinement techniques, in particular, on simplicial space-time meshes is certainly a challenge
that will be a topic of our future research on space-time adaptivity.

\section{Acknowledgment}
The authors would like to thank the Austrian Science Fund  (FWF) 
for the financial support
under the grant DK W1214-04. Furthermore, we would like to thank 
Sergey Repin for discussing with us the use of functional error 
estimates during his visits at Linz.
Andreas Schafelner wants to thank Panayot Vassilevski for the
support during his visits at the Lawrence Livermore National Laboratory,
for many fruitful discussions, and for the possibility to compute on
the  distributed memory cluster Quartz
in Livermore.

\bibliographystyle{acm}
\bibliography{ms}

\end{document}